\def\cl{\centerline}
\def\vs{\vspace*}
\def\L{\mathcal{L}}
\def\H{\mathcal{H}}
\def\B{\mathcal{B}}
\def\Z{\mathbb{Z}}
\def\N{\mathbb{N}}
\def\A{\mathcal{A}}
\def\K{\mathcal{K}}
\def\C{\mathbb{C}}
\numberwithin{equation}{section}
\newtheorem{theo}{Theorem}[section]
\newtheorem{defi}[theo]{Definition}
\newtheorem{coro}[theo]{Corollary}
\newtheorem{lemm}[theo]{Lemma}
\newtheorem{exam}[theo]{Example}
\newtheorem{prop}[theo]{Proposition}
\newtheorem{remark}[theo]{Remark}
\begin{document}
\begin{center}
{\large\bf  Irreducible twisted Heisenberg-Virasoro modules from  tensor products}
\end{center}
\cl{Haibo Chen, Yucai Su}\footnote{\!\!\!\!\!\!\!\!\!\!Haibo Chen: School of Statistics and Mathematics,
Shanghai Lixin University of
Accounting and Finance, Shanghai 201209,  China.  e-mail: rebel1025@126.com.
Yucai Su: School of Mathematical Sciences, Tongji University, Shanghai 200092, China. e-mail:  ycsu@tongji.edu.cn.
{\it Key words}: twisted Heisenberg-Virasoro algebra,  tensor product  module,   irreducible module
\vs{4pt}\\ {\it Mathematics Subject Classification}: 17B10 $\cdot$ 17B65 $\cdot$ 17B68}

{\renewcommand{\abstractname}{}
\begin{abstract}
ABSTRACT.  In this paper,
  we  realize polynomial $\H$-modules $\Omega(\lambda,\alpha,\beta)$ from irreducible twisted Heisenberg-Virasoro modules $\A_{\alpha,\beta}$.
It follows from $\H$-modules $\Omega(\lambda,\alpha,\beta)$ and $\mathrm{Ind}(M)$ that we obtain a  class of natural non-weight tensor product  modules
$\big(\bigotimes_{i=1}^m\Omega(\lambda_i,\alpha_i,\beta_i)\big)\otimes \mathrm{Ind}(M)$.
Then we give the necessary and sufficient conditions
under which these modules are irreducible and  isomorphic, and also give that the irreducible modules
in this class  are new.

\end{abstract}

\section{Introduction}
It is well-known that the {\it twisted Heisenberg-Virasoro algebra}  $\H$ \cite{ADK} is the universal central extension of the
Lie algebra $\bar\H$
of differential operators of order
at most one on the Laurent polynomial algebra $\C[t,t^{-1}]$, where $$\bar\H:=\left\{f(t)\frac{d}{dt}+g(t)\,\Big|\, f(t),g(t)\in\C[t,t^{-1}]\right\}.$$
More precisely,
$\H$    is
 an infinite dimensional Lie algebra
with  $\C$-basis  $\{L_m,I_m,C_i
\mid  m\in \Z,\,i=1,2,3\}$ subject to  the Lie bracket as follows:
\begin{equation*}
\aligned
&[L_m,L_n]= (n-m)L_{m+n}+\delta_{m+n,0}\frac{m^{3}-m}{12}C_1,\\&
 [L_m,I_n]=n I_{m+n}+\delta_{m+n,0}(m^{2}+m)C_2,\\&
[I_m,I_n]=n\delta_{m+n,0}C_3,
\\&
 [\H,C_1]=[\H,C_2]= [\H,C_3]=0.
\endaligned
\end{equation*}
It is clear that the subspaces spanned by  $\{I_m, C_3 \mid  0\neq m\in\Z\}$   and
  by  $\{L_m, C_1 \mid m\in\Z\}$ are respectively the  Heisenberg algebra and the Virasoro algebra.
  Notice that  the center of $\H$ is spanned by  $\{I_0,C_i\mid i=1,2,3\}$.

The representation theory on the twisted Heisenberg-Virasoro algebra has attracted a lot of attention from mathematicians and physicists.
The theory of weight twisted Heisenberg-Virasoro modules with finite-dimensional weight spaces is fairly well-developed (see \cite{SS07,LJ,LZ1}).
 While  weight modules with an infinite dimensional weight spaces were also studied (see \cite{CHS16,R}).
In the last few years, various families of non-weight irreducible  twisted Heisenberg-Virasoro modules were investigated
(see, e.g., \cite{CG,CG2,HCS,CHSY,CHS16,LWZ}). These are basically various versions of Whittaker modules and $\mathcal{U}(\C L_0)$-free modules constructed using different tricks.  However, the theory of  representation over the  twisted Heisenberg-Virasoro algebra  is far more from being well-developed.

In the present paper, we  construct a  class of non-weight $\H$-modules by taking tensor products of a finite number of irreducible $\H$-modules $\Omega(\lambda,\alpha,\beta)$  with  irreducible  $\H$-modules $\mathrm{Ind}(M)$.
At the same time,  inspired by  \cite{LZK},  a class of $\H$-modules $\A_{\alpha,\beta}$ are given. Then many interesting examples for such irreducible twisted Heisenberg-Virasoro modules with different features are provided.
 In particular, a class of irreducible polynomial modules $\Omega(\lambda,\alpha,\beta)$ over the twisted Heisenberg-Virasoro algebra are defined.

We briefly give a summary of the paper below.
 In Sections $2$ and $3$, we recall some known results and  construct  a class of modules $\A_{\alpha,\beta}$ over the twisted Heisenberg-Virasoro algebra.
  In Section $4$, we determine  the necessary and sufficient conditions for $\big(\bigotimes_{i=1}^m\Omega(\lambda_i,\alpha_i,\beta_i)\big)\otimes \mathrm{Ind}(M)$  to be irreducible.
  In Section $5$, we present the necessary and sufficient conditions for  $\H$-modules $\big(\bigotimes_{i=1}^m\Omega(\lambda_i,\alpha_i,\beta_i)\big)\otimes \mathrm{Ind}(M)$ to be isomorphic.
Finally, we prove  $\H$-modules $\big(\bigotimes_{i=1}^m\Omega(\lambda_i,\alpha_i,\beta_i)\big)\otimes \mathrm{Ind}(M)$ are new.

Throughout this paper, we respectively denote by $\C,\C^*,\Z,\Z_+$ and $\N$    the sets of complex numbers,  nonzero complex numbers, integers,  nonnegative integers  and positive integers.
 All vector spaces are assumed to be over $\C$.

\section{Some known results}
In this section,
we  recall some definitions and known results.

Let $\C[t]$ be the (associative) polynomial algebra. Denote $\partial:=t\frac{d}{dt}$.  We know that $\partial t^n=t^n(\partial+n)$ for $n\in\Z$. It is clear that the associative algebra $\widetilde{\mathcal{A}}=\C[t,\partial]$  is a proper subalgebra of the rank $1$ Weyl algebra $\C[t,\frac{d}{dt}]$. We note that $\widetilde{\mathcal{A}}$ is the universal enveloping algebra of the $2$-dimensional solvable Lie algebra $\mathfrak{a}_1=\C L_0\oplus\C L_1$, which subjects to $[L_0,L_1]=L_1$.  Let $\mathcal{K}=\C[t, t^{-1}, \partial]$ be the Laurent polynomial differential operator algebra.

 \begin{defi}
Let $\mathcal{P}$ be an associative or Lie algebra over $\C$ and $\mathcal{Q}$ be a subspace of $\mathcal{P}$. A module $V$ over $\mathcal{P}$ is called {\it $\mathcal{Q}$-torsion} if there exists a nonzero $q\in\mathcal{Q}$ such that $qv=0$ for some nonzero $v\in V$; otherwise $V$ is called {\it $\mathcal{Q}$-torsion-free}.
\end{defi}

Let us recall some results about irreducible modules over the associative algebra $\mathcal{K}$.
\begin{lemm}{\rm \cite{LZK}}
Let $V$ be any $\C[t]$-torsion-free irreducible module over the associative algebra $\widetilde{\mathcal{A}}$. Then  $V$ can be extended into a module over the associative algebra $\mathcal{K}=\C[t,t^{-1}, \frac{d}{dt}]$, i.e., the action of $\widetilde{\mathcal{A}}$ on $V$ is a restriction of an irreducible $\C[t,t^{-1}]$-torsion-free $\mathcal{K}$-module.
\end{lemm}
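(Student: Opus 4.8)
The plan is to take for the desired $\mathcal{K}$-module the localization $V':=\C[t,t^{-1}]\otimes_{\C[t]}V$ of $V$ at the multiplicative set $\{t^{n}\mid n\in\Z_+\}$, and then to verify that the $\widetilde{\mathcal{A}}$-action on $V$ extends to $V'$ with all the claimed properties.

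First I would record that $V$ being $\C[t]$-torsion-free means exactly that every nonzero $f(t)\in\C[t]$ acts injectively on $V$; in particular $t$ acts injectively, so the canonical map $V\to V'$, $v\mapsto 1\otimes v$, is injective, and every element of $V'$ can be written as $t^{-n}\otimes v$ with $n\in\Z_+$ and $v\in V$. Next, using the relation $\partial\,t^{n}=t^{n}(\partial+n)$, which holds in $\mathcal{K}$ for all $n\in\Z$, I would define a $\partial$-action on $V'$ by $\partial(t^{-n}\otimes v):=t^{-n}\otimes(\partial-n)v$, and let $t^{\pm1}$ act in the obvious way. The key points to check are: (i) this $\partial$-action does not depend on the chosen representative $t^{-n}\otimes v$; (ii) these actions satisfy the defining relations of $\mathcal{K}=\C[t,t^{-1},\partial]$, so that $V'$ becomes a $\mathcal{K}$-module; and (iii) restricting to $\widetilde{\mathcal{A}}=\C[t,\partial]$ and using $V\hookrightarrow V'$ recovers the original $\widetilde{\mathcal{A}}$-module $V$. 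Each of these is a direct computation from $\partial t^{n}=t^{n}(\partial+n)$; item (ii) is the step requiring the most care, and is essentially the only real obstacle.

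It then remains to check the two adjectives. For $\C[t,t^{-1}]$-torsion-freeness of $V'$: if some nonzero $g(t)\in\C[t,t^{-1}]$ annihilated a nonzero $w=t^{-n}\otimes v\in V'$, then after multiplying $w$ by a power of $t$ and clearing negative powers from $g$ we would obtain a nonzero $g_0(t)\in\C[t]$ with $g_0(t)v=0$ in $V$, contradicting the hypothesis. For irreducibility over $\mathcal{K}$: given a nonzero $\mathcal{K}$-submodule $W\subseteq V'$, choose $0\neq w\in W$, write $w=t^{-n}\otimes v$, and note that $t^{n}w=1\otimes v$ is a nonzero element of $W\cap V$. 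Since $W$ is in particular stable under $\widetilde{\mathcal{A}}\subseteq\mathcal{K}$, the subspace $W\cap V$ is a nonzero $\widetilde{\mathcal{A}}$-submodule of $V$, hence equals $V$ by irreducibility of $V$; therefore $W\supseteq\sum_{n\in\Z_+}t^{-n}V=V'$, so $W=V'$. Thus $V'$ is the required irreducible $\C[t,t^{-1}]$-torsion-free $\mathcal{K}$-module of which the $\widetilde{\mathcal{A}}$-action on $V$ is the restriction.
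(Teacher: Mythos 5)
Your localization argument is sound: the $\partial$-action on $V'=\C[t,t^{-1}]\otimes_{\C[t]}V$ is well defined (this follows from $(\partial-n-1)(tv)=t(\partial-n)v$), the $\mathcal{K}$-relations hold, $V'$ is $\C[t,t^{-1}]$-torsion-free, and your irreducibility argument via $W\cap(1\otimes V)$ is correct. Note, however, that the paper gives no proof of this lemma (it is quoted from the L\"u--Zhao reference), and the standard argument there is shorter and avoids localization altogether: since $\partial(tv)=t(\partial+1)v$, the subspace $tV$ is an $\widetilde{\mathcal{A}}$-submodule of $V$, nonzero because $V$ is $\C[t]$-torsion-free, hence $tV=V$ by irreducibility; combined with injectivity of $t$ this shows $t$ acts bijectively on $V$ itself, so $t^{-1}$ and therefore all of $\mathcal{K}$ already act on $V$, and irreducibility and torsion-freeness over $\mathcal{K}$ are immediate.

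There is one small but real point you should add. As stated, the lemma asserts that $V$ \emph{itself} acquires a $\mathcal{K}$-module structure extending the given $\widetilde{\mathcal{A}}$-action; your argument as written only produces an embedding of $\widetilde{\mathcal{A}}$-modules $V\hookrightarrow V'$ into an irreducible torsion-free $\mathcal{K}$-module, leaving open the possibility that $V\subsetneq V'$ and that the restriction of $V'$ to $\widetilde{\mathcal{A}}$ is strictly larger than $V$. The fix is exactly the observation above: $tV=V$ by irreducibility, so the canonical map $V\to V'$ is surjective as well as injective, and $V'\cong V$. With that one line added, your proof yields the statement in full; without it, it proves a formally weaker assertion.
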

\begin{lemm}\label{11} {\rm \cite{LZK}}
Let $\mu$ be an irreducible element in the associative algebra $\C(t)[\partial]$. Then
$$\mathcal{K}/(\mathcal{K}\cap\big(\C(t)[\partial]\mu)\big)$$
is a $\C[t,t^{-1}]$-torsion-free irreducible module over the associative algebra $\mathcal{K}$. Moreover any $\C[t,t^{-1}]$-torsion-free irreducible module over the associative algebra $\mathcal{K}$ can be obtained in this way.
\end{lemm}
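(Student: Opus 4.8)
The plan is to work inside the overring $B:=\C(t)[\partial]$, which contains $\mathcal{K}$, and to pass back and forth by localization. First I would record the structural facts. With respect to $\deg_\partial$ one has left division with remainder in $B$ (since $\partial\,c(t)=c(t)\partial+tc'(t)$ and the leading $\partial$-coefficient of a nonzero operator is a unit of $\C(t)$), so $B$ is a principal left ideal domain; hence for irreducible $\mu$ the left ideal $B\mu$ is maximal, $M_\mu:=B/B\mu$ is a simple left $B$-module, and $\dim_{\C(t)}M_\mu=\deg_\partial\mu=:n$, with $\C(t)$-basis $\{\,\bar\partial^{\,i}\mid 0\le i<n\,\}$. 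Also $\mathcal{K}\subseteq B$, and $S:=\C[t,t^{-1}]\setminus\{0\}$ is a (two-sided) Ore set in $\mathcal{K}$ with $S^{-1}\mathcal{K}=B$; localization at $S$ is exact, and every $\C[t,t^{-1}]$-torsion-free $\mathcal{K}$-module embeds into its localization.

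For the first assertion I would use the modular law: the natural $\mathcal{K}$-map $W_\mu:=\mathcal{K}/(\mathcal{K}\cap B\mu)\to M_\mu$, $a+(\mathcal{K}\cap B\mu)\mapsto a+B\mu$, is injective with image $\mathcal{K}\bar 1$, so $W_\mu$ is $\C[t,t^{-1}]$-torsion-free (it lies inside the $\C(t)$-space $M_\mu$) and $S^{-1}W_\mu=M_\mu$. For irreducibility, take $0\ne w\in W_\mu$; since $M_\mu$ is $B$-simple, $Bw=M_\mu$, so $\bar 1=\beta w$ for some $\beta\in B$, and clearing the denominators of the coefficients of $\beta$ gives $f(t)\bar 1\in\mathcal{K}w$ for some $f\in\C[t]\setminus\{0\}$. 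The hard part will then be to remove this factor, i.e. to show $\bar 1\in\mathcal{K}w$ — equivalently, that the $\C[t,t^{-1}]$-ideal $\{g\mid g\bar 1\in\mathcal{K}w\}$ is all of $\C[t,t^{-1}]$ — since in general $W_\mu$ is a proper $\mathcal{K}$-submodule of $M_\mu$ and $M_\mu$ itself is not $\mathcal{K}$-simple. I would prove this by showing that if $t-a$ (with $a\ne0$) divided a generator of that ideal, then applying suitable elements of $\mathcal{K}$, crucially $\partial$ (which mixes the $\C[t,t^{-1}]$-grading of $M_\mu$), produces a generator of strictly smaller degree; equivalently, $W_\mu$ has no simple $\C[t,t^{-1}]$-torsion quotient. (Alternatively this coincides with R. Block's classical determination of the $\C[t]$-torsion-free simple modules over the Weyl algebra $A_1=\C[t,\frac{d}{dt}]$, applied to its localization $\mathcal{K}=A_1[t^{-1}]$, and one may simply cite it.) Granting this, $\mathcal{K}w=W_\mu$ for every $w\ne0$, so $W_\mu$ is irreducible.

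For the converse, let $V$ be $\C[t,t^{-1}]$-torsion-free and $\mathcal{K}$-irreducible, and set $\tilde V:=S^{-1}V\supseteq V$. I would first check that $\tilde V$ is $B$-simple: a nonzero $B$-submodule $U$ contains some $0\ne u=s^{-1}v$ with $s\in S$, $0\ne v\in V$, hence contains $0\ne v=su$, so $U\cap V$ is a nonzero $\mathcal{K}$-submodule of the simple $V$, forcing $U\cap V=V$ and then $U=S^{-1}V=\tilde V$. Fixing $0\ne v\in V$, the PLID property of $B$ gives $\mathrm{Ann}_B(v)=B\mu$ with $\mu$ irreducible (maximality of the annihilator coming from simplicity of $\tilde V=Bv$), and $v\mapsto\bar 1$ identifies $\tilde V\cong M_\mu$. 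Finally $\mathcal{K}v$ is a nonzero $\mathcal{K}$-submodule of $V$, so $V=\mathcal{K}v\cong\mathcal{K}/\mathrm{Ann}_{\mathcal{K}}(v)=\mathcal{K}/(\mathcal{K}\cap\mathrm{Ann}_B(v))=\mathcal{K}/(\mathcal{K}\cap B\mu)$, as required. Thus, apart from the factor-removal step flagged above, the argument is just formal bookkeeping with the Ore set $S$ together with the Euclidean structure of $\C(t)[\partial]$.
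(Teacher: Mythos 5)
The paper offers no proof of this lemma at all --- it is imported verbatim from \cite{LZK} --- so there is nothing internal to compare you against and your argument has to stand on its own. Most of it does: the observation that $B:=\C(t)[\partial]$ is a principal left ideal domain, the identification of $\mathcal{K}/(\mathcal{K}\cap B\mu)$ with $\mathcal{K}\bar 1\subseteq B/B\mu$ (hence torsion-freeness), and the entire converse direction (localize at $S$, check $S^{-1}V$ is $B$-simple, take $\mu$ generating $\mathrm{Ann}_B(v)$ and note $V=\mathcal{K}v\cong\mathcal{K}/(\mathcal{K}\cap B\mu)$) are correct and complete as written.

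The gap is exactly the step you flagged as ``the hard part,'' and it is not merely hard: it cannot be closed, because the first assertion is false in the generality stated. Take $a\in\C^*$ and $\mu=\partial+t(t-a)^{-1}$, which has degree one in $\partial$ and is therefore irreducible in $B$. Since $\mu\cdot(t-a)^{-1}=-t(t-a)^{-2}+t(t-a)^{-2}=0$ for the natural action of $B$ on $\C(t)$, the map $\beta\mapsto\beta\cdot(t-a)^{-1}$ identifies $B/B\mu$ with $\C(t)$ and sends $\bar 1$ to $(t-a)^{-1}$. Consequently $\mathcal{K}/(\mathcal{K}\cap B\mu)\cong\mathcal{K}\cdot(t-a)^{-1}=\C[t,t^{-1},(t-a)^{-1}]$, which contains $\C[t,t^{-1}]=\mathcal{K}\cdot 1$ as a nonzero proper $\mathcal{K}$-submodule (note $1=(t-a)\cdot(t-a)^{-1}$ lies in $\mathcal{K}\cdot(t-a)^{-1}$, while $(t-a)^{-1}\notin\C[t,t^{-1}]$). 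In the language of your sketch, with $w=(t-a)\bar 1$ the ideal $\{g\in\C[t,t^{-1}]\mid g\bar 1\in\mathcal{K}w\}$ equals $(t-a)\C[t,t^{-1}]$ and no application of $\partial$ shrinks the degree, because $\mathcal{K}w=\C[t,t^{-1}]$ is already $\partial$-stable; so the proposed descent has nowhere to go. What is actually true (and what your converse argument secretly proves) is the similarity-class version of Block's theorem: every $\C[t,t^{-1}]$-torsion-free irreducible $\mathcal{K}$-module is $\mathcal{K}/(\mathcal{K}\cap B\mu)$ for \emph{some} irreducible $\mu$, and for each irreducible $\mu$ the module $B/B\mu$ has a unique irreducible $\mathcal{K}$-submodule, namely $\mathcal{K}/(\mathcal{K}\cap B\mu')$ for a suitable $\mu'$ similar to $\mu$. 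So citing Block is not an optional shortcut here --- it is the only way through --- and even then one must cite the socle/similarity formulation rather than the statement as printed. (The concrete modules used later in the paper, such as $\Omega(\lambda)$ and the examples with $\gamma\in\C[t,t^{-1}]$, have their irreducibility verified directly, so this defect does not propagate.)
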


For any $\lambda\in\C^*$,  define a $\mathcal{K}$-module structure on the space $\Omega(\lambda)=\C [\partial]$, the polynomial algebra in $\partial$, by
$$t^m\partial^n=\lambda^m(\partial-m)^n,\ \partial\partial^n=\partial^{n+1}$$
for all $n\in\Z_+,m\in\Z$. Then $\Omega(\lambda)$ is an irreducible module over the associative algebra $\mathcal{K}$ for any $\lambda\in\C^*$ (see \cite{LZK}).
\begin{lemm}\label{22} {\rm \cite{LZK}}
Let $V$ be an irreducible module over the associative algebra $\mathcal{K}$ on which $\C[t,t^{-1}]$ is torsion. Then $V\cong \Omega(\lambda)$ for some $\lambda\in\C^*$.
\end{lemm}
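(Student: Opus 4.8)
The plan is to extract from the torsion hypothesis a common eigenvector for the (invertible) operator $t$ and to show that it generates a copy of $\Omega(\lambda)$; irreducibility of $V$ and of $\Omega(\lambda)$ then forces an isomorphism. \emph{Step 1: producing a $t$-eigenvector.} By hypothesis there are $0\ne v\in V$ and $0\ne f(t)\in\C[t,t^{-1}]$ with $f(t)v=0$. Since $t^{-1}\in\K$, the operator $t$ is invertible on $V$, so after multiplying $f$ by a suitable power of $t$ we may assume $f(t)\in\C[t]$ with $f(0)\ne0$; write $f(t)=c\prod_{i=1}^{r}(t-a_i)^{n_i}$ with $c\in\C^*$ and the $a_i\in\C^*$ pairwise distinct. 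The polynomials $\prod_{j\ne i}(t-a_j)^{n_j}$ are coprime, so $1=\sum_i g_i(t)\prod_{j\ne i}(t-a_j)^{n_j}$ for suitable $g_i\in\C[t]$; then $v=\sum_i v_i$ with $v_i:=g_i(t)\prod_{j\ne i}(t-a_j)^{n_j}v$, and $(t-a_i)^{n_i}v_i=g_i(t)f(t)v=0$. Pick $i$ with $v_i\ne0$, let $k\ge1$ be least with $(t-a_i)^kv_i=0$, and set $w:=(t-a_i)^{k-1}v_i\ne0$. Then $tw=\lambda w$ where $\lambda:=a_i\in\C^*$.

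\emph{Step 2: the $\K$-action on $w$.} Using the relation $\partial t=t\partial+t$ (equivalently $\partial t^m=t^m(\partial+m)$), one proves by induction on $n$ that $t\partial^nw=\lambda(\partial-1)^nw$, hence by $\C$-linearity $t\,q(\partial)w=\lambda\,q(\partial-1)w$ for every $q\in\C[\partial]$; substituting $q(\partial+1)$ for $q(\partial)$ likewise gives $t^{-1}q(\partial)w=\lambda^{-1}q(\partial+1)w$. Induction on $|m|$ then yields
\begin{equation*}
t^m\partial^nw=\lambda^m(\partial-m)^nw\qquad\text{for all }m\in\Z,\ n\in\Z_+,
\end{equation*}
where $(\partial-m)^n$ is expanded as a polynomial in $\partial$.

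\emph{Step 3: identification with $\Omega(\lambda)$.} Since $\K=\bigoplus_{n\ge0}\C[t,t^{-1}]\partial^n$, it is spanned by the monomials $t^m\partial^n$; by Step 2 the submodule $\K w$ therefore equals the span of $\{\partial^nw:n\in\Z_+\}$. As $V$ is irreducible and $w\ne0$, we get $V=\K w=\mathrm{span}\{\partial^nw:n\in\Z_+\}$. Define $\phi:\Omega(\lambda)\to V$ by $\phi(\partial^n)=\partial^nw$, extended $\C$-linearly (well defined because $\{\partial^n\}_{n\ge0}$ is a basis of $\Omega(\lambda)=\C[\partial]$); it is surjective by the previous sentence. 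Comparing the defining action $t^m\partial^n=\lambda^m(\partial-m)^n$, $\partial\cdot\partial^n=\partial^{n+1}$ on $\Omega(\lambda)$ with $\partial\cdot\partial^nw=\partial^{n+1}w$ and the identity of Step 2 on $V$, one checks that $\phi$ intertwines the actions of the algebra generators $t^{\pm1}$ and $\partial$ of $\K$, so $\phi$ is a homomorphism of $\K$-modules. Finally $\phi(1)=w\ne0$, so $\ker\phi\subsetneq\Omega(\lambda)$; since $\Omega(\lambda)$ is irreducible, $\ker\phi=0$ and $\phi$ is an isomorphism, proving $V\cong\Omega(\lambda)$.

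The only genuinely non-formal point is Step 1, namely passing from the bare torsion hypothesis to an honest eigenvector of $t$. Once such a $w$ is in hand, the single relation $tw=\lambda w$ together with $\partial t=t\partial+t$ rigidly propagates to pin down the whole $\K$-action on the cyclic submodule it generates, and irreducibility of both $V$ and $\Omega(\lambda)$ upgrades the resulting canonical map to an isomorphism; I anticipate no real difficulty in the remaining verifications.
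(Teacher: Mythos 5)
Your proof is correct. The paper itself gives no proof of this lemma (it is quoted from \cite{LZK}), but your argument is essentially the one in that source: use the torsion hypothesis to manufacture a genuine $t$-eigenvector $w$ with nonzero eigenvalue (your reduction to $f(0)\neq 0$ is exactly what guarantees $\lambda\in\C^*$), propagate the single relation $tw=\lambda w$ through $\partial t^m=t^m(\partial+m)$ to identify the cyclic submodule $\K w$ with the span of $\{\partial^n w\}$, and then use irreducibility of $V$ and of $\Omega(\lambda)$ to upgrade the canonical surjection to an isomorphism. All the steps, including the partial-fraction extraction of the eigenvector and the handling of negative powers of $t$, check out.
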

Combining Lemmas \ref{11} and \ref{22},  a classification for all irreducible modules over the associative algebra $\mathcal{K}$ are obtained.

Now let us recall a large class of irreducible $\H$-modules,
which includes  the known irreducible modules such as   highest weight modules and Whittaker modules.
For any $e\in\Z_+$, denote by $\H_e$ the subalgebra
$$\mbox{$\sum\limits_{m\in\Z_+}$}(\C L_{m}\oplus\C  I_{m-e})\oplus\C C_1\oplus\C C_2\oplus\C C_3.$$
Take $M(c_0,c_1,c_2,c_3)$ to be an irreducible $\H_e$-module such that  $I_0,C_1,C_2$ and $C_3$ act on it as scalars $c_0,c_1,c_2,c_3$ respectively. For convenience, we   denote $M(c_0,c_1,c_2,c_3)$ by $M$ and form the induced $\H$-module
\begin{equation}\label{ind2.2}
\mathrm{Ind}(M):=\mathcal{U}(\H)\otimes_{\mathcal{U}(\H_{e})}M.
\end{equation}

\begin{theo}\label{th2.1}{\rm \cite{CG}}
Let $e\in\Z_+$ and $M$ be a simple $\H_e$-module with $c_3=0$.
Assume there exists $k\in\Z_+$ such that
\begin{itemize}\lineskip0pt\parskip-1pt\item[{\rm (1)}]
$
\left\{\begin{array}{llll}\mbox{the\ action\ of}\   I_{k}    \ \mbox{on}\   M \ \mbox{is\ injective}\ \quad  &\mbox{if \ }k\neq 0,\\[4pt]
c_0+(n-1)c_2\neq0\quad \mathrm{for\ all}\ n\in\Z\setminus\{0\} &\mbox{if \ }k=0,
\end{array}\right.
$
\item[{\rm (2)}]   $I_nM=L_mM=0$ for  all $n>k$ and $m>k+e$.
\end{itemize}
Then
\begin{itemize}\lineskip0pt\parskip-1pt
\item[{\rm (i)}]
$\mathrm{Ind}(M)$ is a simple $\H$-module$;$
\item[{\rm (ii)}]
the actions of $I_n,L_m$ on $\mathrm{Ind}(M)$ for all $n>k$ and $m>k+e$ are  locally nilpotent.
\end{itemize}
\end{theo}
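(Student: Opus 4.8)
We sketch a proof.

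The plan is to work with the triangular-type decomposition $\H=\H_-\oplus\H_e$, where $\H_-=\bigoplus_{i\ge 1}\C L_{-i}\,\oplus\,\bigoplus_{j\ge e+1}\C I_{-j}$, so that the PBW theorem gives a vector space isomorphism $\mathrm{Ind}(M)\cong\mathcal{U}(\H_-)\otimes M$; I fix the corresponding PBW monomial basis of $\mathcal{U}(\H_-)$. For a monomial $u=L_{-1}^{a_1}L_{-2}^{a_2}\cdots I_{-e-1}^{b_{e+1}}I_{-e-2}^{b_{e+2}}\cdots$ write $\deg_L u=\sum_i ia_i$, $\deg_I u=\sum_j jb_j$, and for $0\ne w=\sum_i u_i\otimes v_i\in\mathrm{Ind}(M)$ (distinct monomials $u_i$, vectors $v_i\in M\setminus\{0\}$) put $d_L(w)=\max_i\deg_L u_i$, and $d_I(w)=\max_i\deg_I u_i$ when $d_L(w)=0$.

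The core of the argument is a reduction lemma: \emph{for every $0\ne w\in\mathrm{Ind}(M)$ there is a product $X$ of generators of $\H$ with $0\ne Xw\in M$.} I would prove it by first driving $d_L$ down to $0$ and then $d_I$ down to $0$. Suppose $d_L(w)>0$ and let $m_0$ be the smallest $L$-mode occurring in some $u_i$ with $\deg_L u_i=d_L(w)$; apply $I_{m_0+k}$ when $k\ge 1$, and $I_{m_0}$ when $k=0$. Using (2) one checks that $I_{m_0+k}$ annihilates $M$ and that, when commuted (together with the lower-mode operators it spawns) through any monomial, every resulting term either acts as $0$ on $M$ or has strictly smaller $\deg_L$, \emph{except} the terms coming from the single commutator $[I_{m_0+k},L_{-m_0}]=-(m_0+k)I_k$ (resp.\ $[I_{m_0},L_{-m_0}]=-m_0\bigl(c_0+(m_0-1)c_2\bigr)$ when $k=0$); by (1) the operator $I_k$ is injective (resp.\ that scalar is nonzero), so the top-$\deg_L$ part of $I_{m_0+k}w$ equals $\sum_i(\text{const})\,(u_i/L_{-m_0})\otimes I_kv_i$ (resp.\ $\otimes v_i$), the sum running over the maximal-$\deg_L$ monomials $u_i$ divisible by $L_{-m_0}$. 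These monomials $u_i/L_{-m_0}$ are pairwise distinct and the vectors $I_kv_i$ are nonzero, so this element is nonzero and has $d_L$ strictly smaller than $d_L(w)$. Iterating reaches a nonzero $w'\in\mathcal{U}(\H)w$ with $d_L(w')=0$, i.e.\ all of whose monomials are products of $I_{-j}$'s only. On such $w'$ I would repeat the procedure with $L_{n_0+k}$ (resp.\ $L_{n_0}$), where $n_0$ is the smallest $I$-mode present in a maximal-$\deg_I$ monomial; this step is cleaner because the remaining generators are $I$'s, which mutually commute, the decisive commutator being $[L_{n_0+k},I_{-n_0}]=-n_0I_k$ (resp.\ $[L_{n_0},I_{-n_0}]=-n_0\bigl(c_0-(n_0+1)c_2\bigr)$, nonzero by (1)), and (2) again gives $L_{n_0+k}M=0$ since $n_0\ge e+1$. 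After finitely many steps one lands in $M\setminus\{0\}$.

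Granting the lemma, (i) follows at once: if $N\ne 0$ is a submodule of $\mathrm{Ind}(M)$ then $N\cap M\ne 0$ by the lemma; since $M$ is an $\H_e$-submodule of $\mathrm{Ind}(M)$ while $N$ is $\H$-stable, $N\cap M$ is an $\H_e$-submodule of the simple module $M$, hence $N\cap M=M$, so $M\subseteq N$ and therefore $N\supseteq\mathcal{U}(\H)M=\mathrm{Ind}(M)$. For (ii), I would show that for $n>k$ (resp.\ $m>k+e$) the operator $I_n$ (resp.\ $L_m$) sends every $u\otimes v$ to a sum of terms of strictly smaller total degree $\deg_L u+\deg_I u$: it annihilates $1\otimes M$ by (2); it crosses the tensor product trivially past any element of $\H_e$ it produces; and each commutation replaces a negative-mode generator $Y$ of $u$ by a generator of strictly larger mode (contributing less to the degree) or by a central scalar. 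Since any vector of $\mathrm{Ind}(M)$ is a finite sum of such $u\otimes v$, the degrees are bounded, so $I_n$ and $L_m$ act locally nilpotently.

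The one genuinely delicate point is the reduction lemma, specifically the risk that the operator applied to $w$ kills it outright through cancellation among the PBW monomials $u_i$. The remedy — choosing $m_0$ (and later $n_0$) to be the \emph{minimal} mode occurring in a maximal-degree monomial — is what guarantees that every competing contribution at the top degree arises by stripping off a single $L_{-m_0}$ (resp.\ $I_{-n_0}$), keeping the surviving monomials distinct and, thanks to hypothesis (1), keeping the attached vectors $I_kv_i$ (or $v_i$) nonzero. Verifying the ``strictly smaller degree'' bookkeeping for all the junk terms, in both phases and both cases $k\ge 1$ and $k=0$, is routine but is where the technical work lies.
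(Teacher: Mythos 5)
The paper offers no proof of this statement --- Theorem \ref{th2.1} is quoted from \cite{CG} --- so your argument can only be measured against the standard induced-module argument, whose architecture you have reproduced: the decomposition $\mathrm{Ind}(M)\cong\mathcal{U}(\H_-)\otimes M$, a reduction lemma landing any nonzero submodule in $1\otimes M$, the deduction of (i) from the simplicity of $M$ over $\H_e$, and the strictly decreasing total degree for (ii). Those parts, and Phase~1 of your reduction, are sound: when you apply $I_{m_0+k}$, a term survives only if it consumes $L$-factors of total weight at least $m_0$ (either to bring the running $I$-index down to at most $k$, so that it can act nontrivially on $M$, or down to at most $-e-1$, so that it re-enters $\mathcal{U}(\H_-)$, which costs weight at least $m_0+k+e+1$); hence the level $d_L(w)-m_0$ is populated only by stripping a single $L_{-m_0}$ from a maximal monomial, exactly as you claim.

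Phase~2 is where there is a genuine gap, not just bookkeeping: for $k=0$ your prescribed operator can annihilate $w$. Take any $e\in\Z_+$, $k=0$, and $w=I_{-e-1}^2\otimes v-\bigl(c_0-(e+2)c_2\bigr)I_{-2e-2}\otimes v$ with $0\neq v\in M$; both monomials have $\deg_I=2e+2$, so both are maximal, $n_0=e+1$, and your recipe says to apply $L_{n_0+k}=L_{e+1}$. Using $L_{e+1}M=0$, $[L_{e+1},I_{-e-1}]=-(e+1)I_0+(e+1)(e+2)C_2$ (the nonzero scalar $-(e+1)(c_0-(e+2)c_2)$) and $[L_{e+1},I_{-2e-2}]=-(2e+2)I_{-e-1}$, one computes $L_{e+1}w=\bigl(-2(e+1)+(2e+2)\bigr)\bigl(c_0-(e+2)c_2\bigr)\,I_{-e-1}\otimes v=0$. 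The culprit is exactly the ``junk'' you dismissed: when $k=0$ the replacement $I_{-j}\mapsto I_{-(j-n_0)}$ for $j\geq n_0+e+1$ stays inside $\mathcal{U}(\H_-)$ and lowers $\deg_I$ by exactly $n_0$, the same amount as the decisive commutator, so it lands at the top level $d_I-n_0$ and can cancel it. (This cannot happen in Phase~1, where such a replacement lowers $\deg_L$ by $j>m_0$, nor in Phase~2 with $k\geq1$, where it lowers $\deg_I$ by $n_0+k>n_0$.) The repair is to let $n_1$ be the \emph{largest} $I$-mode occurring in any monomial of $w$ and apply $L_{n_1+k}$: for $j<n_1$ the commutator $-jI_{n_1+k-j}$ has positive index exceeding $k$, so it kills $M$ and cannot re-enter $\mathcal{U}(\H_-)$, leaving only the pairwise distinct quotients $u_i/I_{-n_1}$ tensored with nonzero vectors. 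Note the asymmetry: the ``largest mode'' choice fails in Phase~1 (e.g.\ $I_{2+k}$ annihilates $L_{-2}\otimes(1+k)v+L_{-1}^2\otimes v$), so the two phases require opposite conventions --- precisely the kind of detail your closing paragraph writes off as routine.
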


 The following result will be used in the following (see \cite{LZ0}).
\begin{prop}\label{pro2.3}
Let $P$ be a vector space over $\C$ and $P_1$  a subspace of $P$. Suppose
that $\mu_1,\mu_2,\ldots,\mu_s\in\C^*$ are pairwise distinct, $v_{i,j}\in P$
and $f_{i,j}(t)\in\C[t]$ with $\mathrm{deg}\,f_{i,j}(t)=j$ for $i=1,2,\ldots,s;j=0,1,2,\ldots,k.$
If $$\sum_{i=1}^{s}\sum_{j=0}^{k}\mu_i^mf_{i,j}(m)v_{i,j}\in P_1\quad{ \it for}\ K< m\in\Z\ (K \ {\it any\ fixed\ element\ in}\  \Z\cup\{-\infty\})$$
then $v_{i,j}\in P_1$ for all $i,j$.
\end{prop}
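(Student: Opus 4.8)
The plan is to argue by induction on $k$, the maximal degree of the polynomials involved, peeling off the top-degree coefficients first by using a Vandermonde-type argument on the exponentials $\mu_i^m$ and then a finite-difference argument on the polynomials $f_{i,j}(m)$. The key structural fact is that the functions $m\mapsto \mu_i^m m^j$, as $(i,j)$ ranges over $\{1,\dots,s\}\times\{0,1,\dots,k\}$, are linearly independent over $\C$ even when restricted to any infinite subset of $\Z$ of the form $\{m\in\Z : m>K\}$; what we need here is the quantitative refinement that a $\C$-linear combination of them with \emph{vector} coefficients in $P$ lying in $P_1$ for all large $m$ forces each coefficient into $P_1$.

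First I would reduce to the case where each $f_{i,j}(t)=t^j$ (a monomial). Since $\deg f_{i,j}=j$, for each fixed $i$ the family $\{f_{i,0},f_{i,1},\dots,f_{i,k}\}$ is a basis of the space of polynomials of degree $\le k$, so we may rewrite $\sum_{j=0}^k f_{i,j}(m)v_{i,j}=\sum_{j=0}^k m^j w_{i,j}$ where each $w_{i,j}$ is a $\C$-linear combination of $v_{i,0},\dots,v_{i,j}$ and, conversely, each $v_{i,j}$ is a $\C$-linear combination of $w_{i,0},\dots,w_{i,j}$; in particular $w_{i,j}\in P_1$ for all $i,j$ if and only if $v_{i,j}\in P_1$ for all $i,j$. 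So it suffices to prove: if $\sum_{i=1}^s\sum_{j=0}^k \mu_i^m m^j w_{i,j}\in P_1$ for all $m>K$, then all $w_{i,j}\in P_1$.

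Next I would kill the exponential factors. Fix an index $i_0$. Consider the shift/scaling operator acting on sequences: replacing $m$ by $m+1$ in the hypothesis and dividing by $\mu_{i_0}^{m+1}$, then subtracting a suitable multiple of the $m$-version, one can form, for each $i\ne i_0$, the operator $E_i:=(\text{shift})-\mu_i/\mu_{i_0}\cdot(\text{identity})$ (after normalizing by $\mu_{i_0}^{-m}$); applying the composite $\prod_{i\ne i_0}E_i^{\,k+1}$ annihilates every term with $i\ne i_0$ while sending the $i=i_0$ block $\sum_{j=0}^k m^j w_{i_0,j}$ to another polynomial in $m$ of the \emph{same} degree $k$ with leading coefficient a nonzero scalar multiple of $w_{i_0,k}$ (because $E_i$ applied to $\mu_{i_0}^m\cdot(\text{poly of degree }d)$, after the normalization, is $\mu_{i_0}^m$ times $(1-\mu_i/\mu_{i_0})\cdot(\text{poly of degree }d)$ plus lower order, and $1-\mu_i/\mu_{i_0}\ne0$ since the $\mu_i$ are pairwise distinct). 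Since $P_1$ is a subspace and all these operations are $\C$-linear combinations of the hypothesis for various $m>K$, the result still lies in $P_1$: we obtain $\sum_{j=0}^k c_j m^j w_{i_0,j}\in P_1$ for all $m$ large, with $c_k\ne0$. Now a standard finite-difference argument on this single polynomial-in-$m$ with coefficients in $P/P_1$: evaluating at $k+1$ distinct values of $m$ and inverting the (invertible) Vandermonde matrix shows each $c_j w_{i_0,j}\in P_1$, hence $w_{i_0,j}\in P_1$ for all $j$. As $i_0$ was arbitrary, we are done.

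The main obstacle is bookkeeping the "same degree, nonzero leading coefficient" claim through the composite of the $E_i$'s cleanly — i.e. verifying that applying $\prod_{i\ne i_0}E_i^{k+1}$ does not accidentally lower the degree of the $i_0$-block or annihilate its leading term. This is exactly where pairwise distinctness of the $\mu_i$ is used, and it is a routine but slightly delicate computation; everything else (the change of basis in the first step, the Vandermonde inversion in the last step) is standard linear algebra. One may alternatively organize the whole argument as a double induction on $s$ and then on $k$, which avoids writing the composite operator explicitly but is essentially the same content.
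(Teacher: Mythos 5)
Your proposal is correct in substance. Note first that the paper itself gives no proof of Proposition \ref{pro2.3}: it is quoted as a known result from the reference [LZ0] (L\"u--Zhao), where the standard argument is a linear-independence/Vandermonde statement for the quasi-polynomial sequences $m\mapsto\mu_i^m m^j$. Your organization via the difference operators $E_i=(\text{shift})-(\mu_i/\mu_{i_0})\cdot\mathrm{id}$ is a clean and valid way to carry that out: each $E_i$ drops the degree of the $i$-th polynomial block by one and rescales the leading coefficients of the other blocks by the nonzero factors $(\mu_l-\mu_i)/\mu_{i_0}$, so $\prod_{i\ne i_0}E_i^{k+1}$ isolates the $i_0$-block, and all operations are finite $\C$-linear combinations of the hypothesis at integers $m>K$, hence stay in $P_1$. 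Two bookkeeping points deserve correction but do not affect the argument. First, in your reduction to monomials the triangularity goes the other way: the coefficient of $m^l$ in $\sum_j f_{i,j}(m)v_{i,j}$ is $w_{i,l}=\sum_{j\ge l}a_{j,l}v_{i,j}$, a combination of $v_{i,l},\dots,v_{i,k}$, not of $v_{i,0},\dots,v_{i,l}$; the matrix is still unitriangular up to the nonzero leading coefficients $a_{j,j}$, so invertibility (and hence the equivalence of the two conclusions) survives. Second, after applying the composite operator the surviving block is not $\sum_j c_j m^j w_{i_0,j}$ with scalars $c_j$: each application of $E_i$ to $\sum_j m^j w_j$ mixes $w_j$ with $w_{j+1},\dots,w_k$ (the coefficient of $m^j$ becomes $(1-\mu_i/\mu_{i_0})w_j+(\mu_i/\mu_{i_0})\sum_{l>j}\binom{l}{j}w_l$ in the normalized picture), so what you obtain is $\sum_j m^j u_j$ with $(u_j)=(\text{invertible upper-triangular matrix})(w_{i_0,j})$. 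The final Vandermonde evaluation at $k+1$ values of $m$ then gives $u_j\in P_1$ for all $j$, and triangular invertibility returns $w_{i_0,j}\in P_1$. With these two adjustments the proof is complete and is, in essence, the same argument as in the cited source.
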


\section{Realize $\H$-module $\Omega(\lambda,\alpha,\beta)$}
Let $\A$ be an irreducible module  over the associative algebra $\K$. For any $\alpha,\beta\in\C$, we define   the action
of $\H$ on $\A$ as follows
\begin{eqnarray}
 &&\label{LI2.1}  L_mv=\big(t^{m}\partial+m\alpha t^m\big)v,\
 I_mv=\beta t^mv,\ C_iv=0
 \end{eqnarray}
 for $i\in\{1,2,3\}, m\in\Z,v\in \A$.
Denote the above action by $\A_{\alpha,\beta}$.
\begin{prop}
 For any  $\alpha,\beta\in\C$, we obtain that
 $\A_{\alpha,\beta}$
is an $\H$-module under the action given in \eqref{LI2.1}.
\end{prop}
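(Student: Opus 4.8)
The plan is to verify directly that the operators defined in \eqref{LI2.1} satisfy the defining relations of $\H$. Since $C_i$ acts as zero and all the central terms in the brackets are multiples of the $C_i$, every relation involving a nonzero central contribution reduces to checking that the ``non-central part'' of the bracket holds as an operator identity on $\A$; the central terms are automatically accounted for by $C_i v = 0$. So it suffices to check the four families of relations $[L_m,L_n] = (n-m)L_{m+n}$, $[L_m,I_n] = n I_{m+n}$, $[I_m,I_n] = 0$, and $[\H, C_i] = 0$, all as operators on $\A$.

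The key computational input is the commutation rule in $\K$, namely $\partial t^m = t^m(\partial + m)$ for $m \in \Z$, equivalently $t^m \partial = (\partial - m) t^m$. First I would compute $[L_m, L_n]$. Writing $L_m = t^m\partial + m\alpha t^m = t^m(\partial + m\alpha)$, one expands $L_m L_n - L_n L_m$ using $\partial t^n = t^n(\partial+n)$ to move all $\partial$'s to the right past the $t$'s; the $\alpha$-linear cross terms should cancel in pairs, the $\alpha^2$ terms cancel by symmetry, and the $\partial$-quadratic terms combine to leave exactly $(n-m)t^{m+n}\partial + (n-m)(m+n)\alpha\, t^{m+n} = (n-m)L_{m+n}$, which matches the required bracket up to the central term $\delta_{m+n,0}\frac{m^3-m}{12}C_1$ that vanishes on $\A$. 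Next, for $[L_m, I_n]$: since $I_n v = \beta t^n v$ and $L_m = t^m(\partial + m\alpha)$, we get $L_m I_n v = \beta t^m(\partial+m\alpha)t^n v = \beta t^{m+n}(\partial + n + m\alpha)v$ and $I_n L_m v = \beta t^n t^m(\partial+m\alpha)v = \beta t^{m+n}(\partial+m\alpha)v$, so the commutator is $\beta n\, t^{m+n} v = n I_{m+n} v$, matching up to the central term $\delta_{m+n,0}(m^2+m)C_2$ which again vanishes. The relation $[I_m, I_n] = 0$ on $\A$ is immediate since $I_m I_n v = \beta^2 t^{m+n} v = I_n I_m v$, and the central term $n\delta_{m+n,0}C_3$ vanishes. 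Finally $[\H, C_i] = 0$ is trivial because $C_i$ acts as $0$.

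There is no real obstacle here: the statement is a routine verification, and the only thing to be careful about is bookkeeping of the scalars in the $[L_m, L_n]$ computation — in particular making sure the $\alpha$-dependent terms reorganize correctly into $(n-m)(m+n)\alpha\, t^{m+n}$ so that the result is genuinely $(n-m)L_{m+n}$ rather than something with a stray coefficient. A clean way to organize this is to note that $L_m$ acts exactly as the degree-$m$ operator $t^m(\partial + m\alpha)$, and that on the associative algebra $\K$ the assignment $t^m(\partial+m\alpha)$ already realizes the Witt (centerless Virasoro) algebra for any fixed $\alpha$ — this is a standard fact about the modules $\Omega(\lambda,\alpha)$ — so the $L$-$L$ relation is inherited, and only the interaction with the $I_n = \beta t^n$ and the vanishing of the centre need separate (trivial) checks.
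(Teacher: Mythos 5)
Your proof is correct and follows essentially the same route as the paper: a direct verification of the bracket relations on $\A$, using $\partial t^m=t^m(\partial+m)$, with the central terms disposed of because each $C_i$ acts as zero. The only difference is presentational: the paper simply cites \cite{LZK} for the relation $L_mL_n-L_nL_m=(n-m)L_{m+n}$ on $\A_{\alpha,\beta}$ (the Virasoro-module structure), whereas you also sketch that computation explicitly; your $[L_m,I_n]$ calculation matches the paper's.
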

\begin{proof}
It follows from   \eqref{LI2.1}   that  we have
\begin{eqnarray*}
&&(L_mI_n-I_nL_m)v
=\beta\big(t^{m}\partial+ m\alpha t^m\big)t^nv
-\beta t^n\big(t^{m}\partial+ m\alpha t^m\big)v
=nI_{m+n}v.
\end{eqnarray*}
That is,   $L_mI_n-I_nL_m=nI_{n+m}$ holds on  $\A_{\alpha,\beta}$. By \cite{LZK},  $L_mL_n-L_nL_m=(n-m)L_{n+m}$ holds on  $\A_{\alpha,\beta}$. Finally,  the relation   $I_mI_n-I_nI_m=0$  on $\A_{\alpha,\beta}$ is trivial.
 Thus, we obtain that $\A_{\alpha,\beta}$
is an $\H$-module.
\end{proof}
Now we recall the  necessary and sufficient conditions for $\A_{\alpha,\beta}$   to be irreducible (see \cite{LZK}).
\begin{theo}
Let $\alpha,\beta\in\C$ and $\A$ be   an irreducible module   over the association algebra $\K$.
Then  $\A_{\alpha,\beta}$ as an irreducible  $\H$-modules if and only if one of the following holds
 \begin{itemize}\lineskip0pt\parskip-1pt
\item[\rm (1)]  $\alpha\notin\{0,1\}$ or $\beta\neq0$.
\item[{\rm (2)}] $\alpha=1,\beta=0$ and  $\partial\A=\A$.

\item[{\rm (3)}] $\alpha=\beta=0$ and  $\A$ is  not isomorphic to the natural $\K$ module $\C[t,t^{-1}]$.
 \end{itemize}
\end{theo}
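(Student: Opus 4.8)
The plan is to reduce the irreducibility question for $\A_{\alpha,\beta}$ to the known irreducibility of $\A$ as a $\K$-module, since the $\H$-action is built directly from the $\K$-action via \eqref{LI2.1}. A submodule of $\A_{\alpha,\beta}$ is in particular a subspace of $\A$ stable under all $L_m v = (t^m\partial + m\alpha t^m)v$ and all $I_m v = \beta t^m v$, so I first want to understand what associative subalgebra of $\K$ is generated by these operators. When $\beta\neq 0$, the operators $I_m$ give all of $\C[t,t^{-1}]$, and together with $L_0 = \partial$ one recovers all of $\K$, so any nonzero $\H$-submodule is a nonzero $\K$-submodule, hence equals $\A$ by irreducibility over $\K$; thus item (1) with $\beta\neq 0$ is immediate. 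When $\beta = 0$ but $\alpha\notin\{0,1\}$, the subalgebra is generated by the operators $t^m(\partial + m\alpha)$ for $m\in\Z$; the key computation is that products and differences of these, for suitable choices of $m$, again produce all monomials $t^m$ and $\partial$, so again the generated algebra is all of $\K$. The cleanest way is: from $L_m$ and $L_{-m}$ (and $L_0=\partial$) extract $t^m$ and $t^{-m}$ as elements of the generated algebra — e.g. $L_m L_{-m}$ and $L_{-m}L_m$ are both in the algebra, and a $\C$-linear combination using the commutation $\partial t^{\pm m} = t^{\pm m}(\partial \pm m)$ isolates $t^m\cdot(\text{scalar})$ precisely when $2m\alpha \ne$ the bad values, which fails only when $\alpha \in \{0,1\}$ (and then only for specific $m$); handling this requires using several values of $m$ simultaneously. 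This is the step I expect to be the main obstacle: showing that for $\alpha\notin\{0,1\}$ the operators $\{t^m\partial + m\alpha t^m : m\in\Z\}$ generate the full Laurent differential operator algebra $\K$, with careful bookkeeping of which linear combinations are available.

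Next I turn to the boundary cases. For case (2), $\alpha = 1,\beta = 0$: here $L_m = t^m(\partial + m) = \partial t^m$, so the generated subalgebra is exactly $\K\partial$ (the left ideal, or rather the subalgebra $\C[t,t^{-1}]\partial + \C$), and an $\H$-submodule is precisely a $\C[t,t^{-1}]$-submodule of $\partial\A$ closed appropriately. If $\partial\A = \A$ then $\K\partial$ acts the same as all of $\K$ on $\A$ restricted... more carefully, one checks that $\partial\A=\A$ forces every nonzero $\H$-submodule $W$ to satisfy $W = \partial W \subseteq$ the $\K$-submodule generated by $W$, and since $\partial\A=\A$ the operator $\partial$ is surjective so $W$ is a nonzero $\K$-submodule, hence $W=\A$; conversely if $\partial\A\neq\A$ then $\partial\A$ is a proper nonzero $\H$-submodule (it is stable under each $L_m=\partial t^m$ and under $I_m=0$), so $\A_{\alpha,\beta}$ is reducible — giving the "if and only if". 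For case (3), $\alpha=\beta=0$: now $L_m = t^m\partial$ and $I_m = 0$, so the generated subalgebra is $\K\partial$ again, in fact the subalgebra generated by $\{t^m\partial\}$, which is $\C + \sum_m \C t^m\partial$ together with products $t^m\partial\, t^n\partial = t^{m+n}\partial(\partial+n)\dots$; the span of $\{t^m\partial^k : k\geq 1\} + \C$. One checks this acts irreducibly iff $\A$ is not the module $\C[t,t^{-1}]$ on which $\partial$ acts as $t\frac{d}{dt}$, since on that module $\C\cdot 1$ (the constants) is annihilated by $\partial$ and hence is a trivial $\H$-submodule, whereas on any other irreducible $\K$-module the kernel of $\partial$ is zero (here I invoke the classification, Lemmas \ref{11} and \ref{22}, together with the explicit description of $\Omega(\lambda)$) and then $\{t^m\partial, \partial\}$-closure forces fullness.

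Finally, I should also verify the negative direction of case (1): that if $\alpha\in\{0,1\}$ and $\beta=0$, then $\A_{\alpha,\beta}$ is reducible — but that is exactly what cases (2) and (3) address (with the extra hypotheses $\partial\A=\A$ or $\A\not\cong\C[t,t^{-1}]$ pinpointing when reducibility is avoided), so the three items together are mutually exclusive and exhaustive over the parameter space $\{\beta=0,\ \alpha\in\{0,1\}\}\cup\{\text{rest}\}$, and the proof is organized as: (a) reduce submodules to subspaces stable under the generated associative subalgebra $\A_\H \subseteq \K$; (b) show $\A_\H = \K$ when $\beta\neq 0$ or $\alpha\notin\{0,1\}$; (c) in the two remaining cases identify $\A_\H$ explicitly and read off irreducibility from the structure of $\ker\partial$ on $\A$ and from surjectivity of $\partial$. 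The genuinely delicate point remains step (b) for $\beta=0,\ \alpha\notin\{0,1\}$, where I anticipate needing Proposition \ref{pro2.3} or a direct Vandermonde-type argument to separate the contributions of different powers of $t$ from the family $t^m(\partial+m\alpha)$.
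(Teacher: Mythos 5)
First, a point of reference: the paper does not actually prove this theorem --- it is quoted from \cite{LZK} (where the Virasoro case $\beta=0$ is proved; the extension to $\beta\neq 0$ is the easy observation you make). So your sketch has to be judged against that proof. Your overall architecture is the right one and matches it: a submodule of $\A_{\alpha,\beta}$ is a subspace invariant under the operator algebra generated by $t^m\partial+m\alpha t^m$ and $\beta t^m$, one shows this algebra acts like all of $\K$ outside the boundary cases, and in the boundary cases irreducibility is read off from $\ker\partial$ and from $\partial\A$. Your handling of $\beta\neq0$ and of case (3) is correct.

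There is, however, a genuine gap at exactly the step you flag as delicate, and the mechanism you propose there fails. For $\beta=0$ the products $L_mL_{-m}=(\partial-m+m\alpha)(\partial-m\alpha)$ and $L_{-m}L_m=(\partial+m-m\alpha)(\partial+m\alpha)$ are polynomials in $\partial$ alone (torus-degree $0$ in $t$), so no linear combination of them can ``isolate $t^m\cdot(\text{scalar})$''; indeed their difference is $-2m\partial$, independent of $\alpha$, so this computation also cannot be where the hypothesis $\alpha\notin\{0,1\}$ enters --- and your sketch never pins down where it does. The correct device is to fix $s\in\Z$, vary $n$, and compute
\begin{equation*}
L_{s-n}L_n\,w \;=\; t^s\partial^2 w+(n+s\alpha)\,t^s\partial w+\bigl(\alpha(1-\alpha)\,n^2+s\alpha^2 n\bigr)\,t^s w ,
\end{equation*}
a polynomial of degree $\le 2$ in $n$ whose $n^2$-coefficient is $\alpha(1-\alpha)\,t^sw$; the second finite difference in $n$ then places $2\alpha(1-\alpha)\,t^sw$ in any submodule $W$ containing $w$, and $\alpha\notin\{0,1\}$ is used precisely here to conclude $t^sw\in W$, whence $W$ is $\K$-stable and equals $\A$. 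A second, smaller defect: in case (2) your sufficiency argument rests on ``$\partial\A=\A$ forces $W=\partial W$'', which is unjustified. The repair is to set $W'=\C[t,t^{-1}]W$; since $L_m=\partial t^m$ one gets $\partial W'=\sum_m L_mW\subseteq W\subseteq W'$, so $W'$ is a nonzero $\K$-submodule, hence $W'=\A$, and then $\A=\partial\A=\partial W'\subseteq W$. With these two repairs your outline becomes a complete proof along the lines of \cite{LZK}.
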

The isomorphism results for modules $\A_{\alpha,\beta}$ as follows.
\begin{theo}
Let $\alpha_1, \alpha_2, \beta_1,\beta_2\in \C$ and $\A,\mathcal{B}$ be irreducible modules over the associative algebra $\K$.
 Then $\A_{\alpha_1, \beta_1}\cong\B_{\alpha_2, \beta_2}$ as $\H$-modules if and only if one of the following holds
 \begin{itemize}\lineskip0pt\parskip-1pt
\item[\rm (1)] $\A\cong\B$ as $\K$-modules, $\alpha_1=\alpha_2$ and $\beta_1=\beta_2$.
\item[{\rm (2)}] $\A\cong\B$ as $\K$-modules, $\alpha_1=1,\alpha_2=0,\beta_1=\beta_2=0$ and $\partial\A=\A$.

\item[{\rm (3)}] $\A\cong\B$ as $\K$-modules,  $\alpha_1=0,\alpha_2=1,\beta_1=\beta_2=0$ and $\partial\B=\B$.

 \end{itemize}
\end{theo}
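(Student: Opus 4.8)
The plan is to prove the two implications separately; the ``if'' direction is short, and the substance lies in the ``only if'' direction when $\beta_1=\beta_2=0$. For the ``if'' direction, (1) is immediate from \eqref{LI2.1}: a $\K$-isomorphism $\A\to\B$ carries the $\H$-action on $\A_{\alpha,\beta}$ to that on $\B_{\alpha,\beta}$ verbatim. Since reversing an isomorphism interchanges (2) and (3), it suffices to prove (2); after identifying $\A$ with $\B$ through the given $\K$-isomorphism, I would verify that the operator $\partial$ is an $\H$-homomorphism $\A_{0,0}\to\A_{1,0}$ — on both modules $I_m$ and the $C_i$ act by $0$, while $L_m$ acts by $t^m\partial$ on $\A_{0,0}$ and by $t^m(\partial+m)=\partial\,t^m$ on $\A_{1,0}$, so $\partial\circ(t^m\partial)=(\partial\,t^m)\circ\partial$ — and then check that $\partial\A=\A$ forces $\partial$ to be bijective on $\A$: if $\ker\partial\neq0$ then, because $t$ shifts $\partial$-eigenvalues by $\pm1$, every integer $\partial$-eigenspace is nonzero and their sum is a nonzero $\K$-submodule, hence all of $\A$; but then $\partial\A$ misses the $0$-eigenspace $\ker\partial$, contradicting $\partial\A=\A$. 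Thus $\partial\colon\A_{0,0}\xrightarrow{\sim}\A_{1,0}$, which yields (2).

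For the ``only if'' direction, let $\phi\colon\A_{\alpha_1,\beta_1}\xrightarrow{\sim}\B_{\alpha_2,\beta_2}$ be an $\H$-isomorphism. Since $I_0$ is central and acts on $\A_{\alpha_i,\beta_i}$ by the scalar $\beta_i$, intertwining $I_0$ forces $\beta_1=\beta_2=:\beta$. If $\beta\neq0$, then intertwining $I_m=\beta t^m$ for every $m$ shows $\phi$ is $\C[t,t^{-1}]$-linear; substituting this into the $L_m$-relation $\phi\big((t^m\partial+m\alpha_1t^m)v\big)=(t^m\partial+m\alpha_2t^m)\phi(v)$, cancelling $t^m$, and varying $m$ gives $\phi(\partial v)=\partial\phi(v)$ and $\alpha_1=\alpha_2$, so $\phi$ is a $\K$-isomorphism and we are in case (1).

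Now suppose $\beta=0$. Only the $L_m$ act nontrivially, and $L_0=\partial$ on both sides gives $\phi\partial\phi^{-1}=\partial$. Write $P$ for the action of $t$ on $\B$ (invertible, with $\partial P=P(\partial+1)$) and $Q_m:=\phi\,t^m\phi^{-1}\in\mathrm{End}_{\C}(\B)$; then $Q_mQ_n=Q_{m+n}$, $\partial Q_m=Q_m(\partial+m)$, and, reading off the $L_m$-relation,
$$Q_m(\partial+m\alpha_1)=P^m(\partial+m\alpha_2),\qquad m\in\Z.$$
Put $R:=P^{-1}Q_1$; from $\partial Q_1=Q_1(\partial+1)$ one gets $\partial R=R\partial$, so $R$ commutes with $\partial$. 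Combining the relations for $m=1$ and $m=-1$ (using $P^{-1}\partial P=\partial+1$) yields the dichotomy $(R^{-1}-1)(\alpha_1+\alpha_2-1)=0$. If $\alpha_1+\alpha_2\neq1$ then $R=1$, so $Q_1=P$, $\phi$ is $\C[t,t^{-1}]$-linear and hence — being also $\partial$-intertwining — a $\K$-isomorphism, and the $m=1$ relation forces $\alpha_1=\alpha_2$: again case (1).

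The remaining, and hardest, case is $\beta=0$, $\alpha_1+\alpha_2=1$. Here I would bring in the relations for $|m|=2$, writing $Q_2=Q_1^2=P^2R^{\sigma}R$ with $R^{\sigma}:=P^{-1}RP$ again commuting with $\partial$; together with $Q_mQ_n=Q_{m+n}$ and the structure of irreducible $\K$-modules from Lemmas \ref{11} and \ref{22} (decompose $\B$ into $\partial$-eigenspaces, or pass to $\C(t)[\partial]/\C(t)[\partial]\mu$), this reduces to matching the roots of two cubic polynomials in $\partial$, whose only solutions under $\alpha_1+\alpha_2=1$ are $\alpha_1=\alpha_2=\tfrac12$ and $\{\alpha_1,\alpha_2\}=\{0,1\}$. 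In the first case the same relations force $R=1$, so $\phi$ is a $\K$-isomorphism: case (1). In the second, say $\alpha_1=1,\alpha_2=0$, the $m=1$ relation reads $R(\partial+1)=\partial$, which gives $(R^{-1}-1)\partial=\mathrm{id}=\partial(R^{-1}-1)$, so $\partial$ is bijective on $\B$, whence $\partial\A=\A$, and then $\phi\circ\partial$ is checked to be a $\K$-isomorphism $\A\xrightarrow{\sim}\B$: case (2); the subcase $\alpha_1=0,\alpha_2=1$ is symmetric (use $\partial^{-1}\circ\phi$) and gives case (3). I expect the main obstacle to be exactly this last case: controlling the commutant of $\partial$ on an arbitrary irreducible $\K$-module and extracting from the infinitely many identities $Q_m(\partial+m\alpha_1)=P^m(\partial+m\alpha_2)$ enough information to pin down $\alpha_1$ and $\alpha_2$.
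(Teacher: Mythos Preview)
Your argument for the sufficiency of (2) and for the ``only if'' direction when $\beta\neq0$, and your reduction of the $\beta=0$ case to the dichotomy $R=\mathrm{id}$ versus $\alpha_1+\alpha_2=1$, are all correct and in fact go well beyond what the paper does. The paper's own proof disposes of the entire $\beta_1=\beta_2=0$ case in one line: once $I_m$ acts trivially, the $\H$-module structure on $\A_{\alpha,0}$ is exactly the Virasoro module $\A_\alpha$ of L\"u--Zhao, and the paper simply invokes \cite[Theorem~12]{LZK}, where the isomorphism classification of those Virasoro modules is carried out. So on the $\beta\neq0$ side your approach coincides with the paper's, while on the $\beta=0$ side you are attempting to reprove a nontrivial result that the paper only cites.

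The genuine gap is precisely where you locate it: the subcase $\alpha_1+\alpha_2=1$. Your ``cubic polynomial'' heuristic amounts to pretending $R=(\partial+\alpha_2)(\partial+\alpha_1)^{-1}$ and $R^\sigma=(\partial+1+\alpha_2)(\partial+1+\alpha_1)^{-1}$, so that the $m=2$ relation $R^\sigma R(\partial+2\alpha_1)=\partial+2\alpha_2$ becomes
\[
(\partial+1+\alpha_2)(\partial+\alpha_2)(\partial+2\alpha_1)=(\partial+2\alpha_2)(\partial+1+\alpha_1)(\partial+\alpha_1),
\]
whose only solutions with $\alpha_1+\alpha_2=1$ are indeed $\{\alpha_1,\alpha_2\}\subseteq\{0,\tfrac12,1\}$. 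This is literally valid on $\Omega(\lambda)$ (where any operator commuting with multiplication by $\partial$ on $\C[\partial]$ is itself multiplication by a polynomial, forcing $R\in\C^*$ and hence $\alpha_1=\alpha_2$ immediately) and on intermediate series modules (where $\partial$ is diagonalizable with simple spectrum). But for a general torsion-free irreducible $\K$-module $\K/(\K\cap\C(t)[\partial]\mu)$ with $\deg\mu\geq2$, the commutant of $\partial$ is not a priori generated by rational functions of $\partial$, and neither $\partial+\alpha_1$ nor $\partial+\alpha_2$ need be invertible, so the passage from the operator identity to the polynomial identity is not justified. Closing this requires the module-by-module analysis carried out in \cite{LZK}; your sketch does not supply it, and the paper does not either --- it defers to that reference.
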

\begin{proof}
{\rm (1)}
The sufficiency of the conditions is clear. Now suppose that
$\varphi:\mathcal{A}_{\alpha_1,\beta_1}\rightarrow \mathcal{A}_{\alpha_2,\beta_2}$ is an $\mathcal{H}$-module isomorphism.
 For any $v\in \mathcal{A}$, we have $\varphi(I_0v)=I_0\varphi(v)$, which gives $\beta_1=\beta_2$.
 In particular, $\beta_1\neq0$.  We note that $\varphi(I_mv)=I_m\varphi(v)$, which implies
  \begin{equation}\label{32}
  \varphi(t^mv)=t^m\varphi(v)
  \end{equation} for any $m\in\Z$.
 From  $\varphi(L_0^mv)=L_0^m\varphi(v)$, we have
  \begin{equation}\label{33}
  \varphi(\partial^mv)=\partial^m\varphi(v)
  \end{equation} for $m\in\Z$.
  Combining \eqref{32} and \eqref{33}, we obtain $\mathcal{A}\cong\mathcal{B}$ as $\mathcal{K}$-modules.
  From \eqref{32} and \eqref{33}, it is easy to get
 $$0=\varphi(L_mv)-L_m\varphi(v)=\varphi\big((t^{m}\partial+m\alpha_1 t^m)v\big)-(t^{m}\partial+m\alpha_2 t^m)\partial\varphi(v)=m(\alpha_1-\alpha_2)t^{m}\varphi(v).$$
 Then $\alpha_1=\alpha_2$. If $\beta_1=0$, these modules reduce to the Virasoro modules (see \cite{LZK}). This is {\rm (1)}.

 By the \cite[Theorem 12]{LZK}, we get  {\rm (2)} and  {\rm (3)}.
\end{proof}

Now we realize $\H$-modules $\Omega(\lambda,\alpha,\beta)$ from $\mathcal{A}_{\alpha,\beta}$.
Let $\lambda\in\C^*$ and $\alpha,\beta\in\C$.
Then we get the irreducible $\K$-module $\Omega(\lambda)$, which has a basis $\{\partial^k:k\in\Z_+\}$, and the $\K$-actions  are given by
$$t^m\cdot \partial^n=\lambda^m(\partial-m)^n,\ \partial\cdot \partial^m=\partial^{m+1}\quad \mathrm{for}\ m\in\Z, n\in\Z_+.$$
According to  \eqref{LI2.1} we have $\H$-modules $\Omega(\lambda,\alpha,\beta)=\C[\partial]$ with the action:
\begin{eqnarray*}
  L_m \partial^n=\lambda^m\big(\partial+m(\alpha-1)\big)(\partial-m)^{n},\
 I_m \partial^n= \lambda^m\beta (\partial-m)^{n}\quad {\rm for}\  m\in\Z,n\in\Z_+.
 \end{eqnarray*}
Then $\Omega(\lambda,\alpha,\beta)$ is irreducible
 if and only if $\alpha\neq1$ or $\beta\neq0$ (see \cite{CG2}).
In the following sections, we will consider a class of tensor product $\H$-modules related to  $\Omega(\lambda,\alpha,\beta)$.

Now we describe some other examples about irreducible $\H$-modules $\A_{\alpha,\beta}$,
such as intermediate series modules, degree two modules and degree $n$ modules.

\begin{exam}\label{ex5.1}
Let $\gamma\in\C[t,t^{-1}],\beta\in\C$ and $\mu=\partial-\gamma$ in Lemma \ref{11}.
Then we obtain the irreducible $\K$-module
$$\A=\K/\big(\K\cap(\C(t)[\partial]\mu)\big)=\K/(\K\mu)$$
with a basis $\{t^k: k\in\Z\}$.
We see that the $\K$-actions on $\A$ are given by
$$\partial\cdot t^n=t^n(\gamma+n),\ t^m\cdot t^n=t^{m+n}\quad \mathrm{for}\  m,n\in\Z.$$
It follows from  \eqref{LI2.1} that we get $\H$-modules $\A_{\gamma,\alpha,\beta}=\C[t,t^{-1}]$ with the action:
\begin{eqnarray*}
  L_m t^n=(\gamma+n+m\alpha)t^{m+n},\
 I_m t^n=\beta t^{m+n}\quad {\rm for}\  m,n\in\Z.
 \end{eqnarray*}
 If $\gamma\in \C\setminus\Z$ or $\alpha\notin\{0,1\}$ or $\beta\neq0$, then $\A_{\gamma,\alpha,\beta}$ is an irreducible $\H$-module (see \cite{LJ,LGZ}).
In particular, $\gamma\in\C$ these modules $\A_{\gamma,\alpha,\beta}$ are the intermediate series modules of $\H$ (see \cite{LJ,KS}).
\end{exam}

Some degree two irreducible elements in $\C(t)[\partial]$ were first constructed in  \cite{LZK}.
\begin{exam}
Let $f(t)\in \C[t,t^{-1}]$ be such that $\partial^2-f(t)$ is irreducible in $\C(t)[\partial]$. Take $\mu=\partial^2-f(t)$ in Lemma \ref{11}. Then one obtain the irreducible $\K$-module
$$\A=\K/\big(\K\cap(\C(t)[\partial]\mu)\big)=\K/(\K\mu),$$
which has a basis $\{t^k, t^k\partial: k\in\Z\}$. The $\K$-actions   on $\A$ are given by
\begin{eqnarray*}
  &&t^m\cdot t^n=t^{m+n},\  t^m\cdot (t^n\partial)=t^{m+n}\partial,\\
 &&\partial\cdot t^n=t^{n}(\partial+n),\ \partial\cdot (t^n\partial)=t^{n}(f(t)+n\partial),
 \end{eqnarray*}
where $m,n\in\Z$.
From \eqref{LI2.1}, for $\alpha\neq1$ or $\beta\neq0$, we have irreducible $\H$-modules $\A_{\alpha,\beta}=\C[t,t^{-1}]\oplus\C[t,t^{-1}]\partial$ with the action:
\begin{eqnarray*}
  &&L_m\cdot t^n=t^{m+n}(n+m\alpha+\partial),\  L_m\cdot (t^n\partial)=t^{m+n}(f(t)+m\alpha+n\partial),\\
 &&I_m\cdot t^n=\beta t^{m+n},\  I_m\cdot (t^n\partial)=\beta t^{m+n}\partial.
 \end{eqnarray*}
\end{exam}

Some degree $n$ irreducible elements in $\C(t)[\partial]$ were first constructed in  \cite{LZK}.
\begin{exam}
For any $n\in\Z_+\setminus\{0\}$, let $\mu=(\frac{d}{dt})^n-t$ in Lemma  \ref{11}. Then we have the irreducible $\K$-module
$$\A=\K/(\K\cap(\C(t)[\partial]\mu))=\K/(\K\mu),$$
which has a basis $\{t^k(\frac{d}{dt})^m: k\in\Z, m=0,1, \ldots, n-1\}$. The actions of $\K=\C[t,t^{-1}][\frac{d}{dt}]$ are given by
\begin{eqnarray*}
  &&t^k\cdot (t^r(\frac{d}{dt})^m)=t^{k+r}(\frac{d}{dt})^m\quad \mathrm{for} \ k,r\in\Z,\ 0\leq m\leq n-1,\\
  && \frac{d}{dt}\cdot (t^r(\frac{d}{dt})^m)=rt^{r-1}(\frac{d}{dt})^m+t^r(\frac{d}{dt})^{m+1}\quad \mathrm{for} \ r\in\Z,\ 0\leq m< n-1,\\
 && \frac{d}{dt}\cdot (t^r(\frac{d}{dt})^{n-1})=rt^{r-1}(\frac{d}{dt})^{n-1}+t^{r+1}\quad \mathrm{for} \ r\in\Z.
 \end{eqnarray*}

Using \eqref{LI2.1}, for  $\alpha\neq1$ or $\beta\neq0$, we obtain irreducible $\H$-modules $\A_{\alpha,\beta}=\C[t,t^{-1}]\times(\Sigma_{i=0}^{n-1}\C(\frac{d}{dt})^i)$ with the action:
\begin{eqnarray*}
  &&L_k\cdot (t^r(\frac{d}{dt})^m)=(rt^{k+r}+\alpha kt^{k+r+1})(\frac{d}{dt})^m+t^{k+r+1}(\frac{d}{dt})^{m+1},\\
 &&L_k\cdot (t^r(\frac{d}{dt})^{n-1})=(rt^{k+r}+\alpha kt^{k+r+1})(\frac{d}{dt})^{n-1}+t^{k+r+2},\\
 &&I_k\cdot (t^r(\frac{d}{dt})^m)=\beta t^{k+r}(\frac{d}{dt})^m,
 \end{eqnarray*}
where  $k,r\in\Z,0\leq m< n-1$.
\end{exam}

\section{Irreducibilities}
In this section,  we will determine the irreducibility
of  $\big(\bigotimes_{i=1}^m\Omega(\lambda_i,\alpha_i,\beta_i)\big)\otimes \mathrm{Ind}(M)$.

Now we introduce some notations. Let $m\in\N,\lambda_i,\alpha_i,\beta_i\in\C$ for $i=1,2,\ldots,m$.
Denote  $\Omega(\lambda_i,\alpha_i,\beta_i)=\C[\partial_i]$. The actions of $\H$ on $\Omega(\lambda_i,\alpha_i,\beta_i)$
are
 $$L_k \partial_i^n=\lambda_i^k\big(\partial_i+k \alpha_i)(\partial_i-k)^{n},\
 I_k \partial_i^n= \lambda_i^k\beta_i (\partial_i-k)^{n},\ C_j\partial_i^n=0$$
  for $k\in\Z,n\in\Z_+,i=1,2,\ldots,m,j=1,2,3$. Then $\Omega(\lambda_i,\alpha_i,\beta_i)$ is irreducible if and only if $\alpha_i\neq0$ or $\beta_i\neq0$ for $i=1,\ldots,m.$
For convenience, we write $\bigotimes_{i=1}^m\Omega(\lambda_i,\alpha_i,\beta_i)=\C[\partial_1,\partial_2,\ldots,\partial_m]$  for $m\in\N$.

Now we consider the tensor product $\big(\bigotimes_{i=1}^m\Omega(\lambda_i,\alpha_i,\beta_i)\big)\otimes \mathrm{Ind}(M)$. Define a total order $``\prec"$ on the  subset
$$\{\partial_1^{p_1}\cdots\partial_m^{p_m}\otimes v\mid\mathbf{P}=(p_1,\ldots,p_m)\in\Z_+^{m},m\in\N,0\neq v\in \mathrm{Ind}(M)\},$$
by
\begin{eqnarray*}\partial_1^{p_1}\cdots\partial_m^{p_m}\otimes u\prec
\partial_1^{q_1}\cdots\partial_m^{q_m}\otimes v \Longleftrightarrow
\exists k\in\N\ \mathrm{such\ that} \ p_k<q_k\ \mathrm{and} \ p_n=q_n \ \mathrm{for} \ n<k.\end{eqnarray*}
Then   each non-zero element $w$ in $\big(\bigotimes_{i=1}^m\Omega(\lambda_i,\alpha_i,\beta_i)\big)\otimes \mathrm{Ind}(M)$  can be (uniquely) written as follows
$$w=\sum_{\mathbf{p}\in I}  \partial_1^{p_1}\cdots\partial_m^{p_m}\otimes v_{\mathbf{p}},$$
where $I$ is a finite subset of $\Z_+^{m}$  and the $v_{\mathbf{p}}$ are nonzero vectors of $\mathrm{Ind}(M)$. Now we define
$\mathrm{deg}(w)=(p_1,\ldots,p_m)$, where $\partial_1^{p_1}\cdots\partial_m^{p_m}\otimes v_{\mathbf{p}}$ is the term with maximal order in the sum.  Notice that $\mathrm{deg}(1\otimes v)=\mathbf{0}=(\underbrace{0,0,\ldots,0}_{m})$.

\begin{lemm}\label{lemm2}
Let $S=\{1,2\},S^\prime\subseteq S,\lambda\in\C^*,\beta_i,\alpha_j\in\C^*,\beta_j=0$ for $i\in S^\prime,j\in S\setminus S^\prime$  and $s\in\Z_+$.
 Denote $W_s$ the vector subspace of $\Omega(\lambda,\alpha_1,\beta_1)\otimes\Omega(\lambda,\alpha_2,\beta_2)$ spanned by $\{f(\partial_1)(\partial_1+\partial_2)^n\mid n\in\Z_+,0\leq\mathrm{deg}(f)\leq s\}$ or $\{(\partial_1+\partial_2)^nf(\partial_2)\mid n\in\Z_+,0\leq\mathrm{deg}(f)\leq s\}$.
  Then $W_s$ is a submodule of $\Omega(\lambda,\alpha_1,\beta_1)\otimes\Omega(\lambda,\alpha_2,\beta_2)$.
\end{lemm}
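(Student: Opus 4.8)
The plan is to check directly that $W_s$ is stable under the action of each generator $L_k$, $I_k$, $C_j$ ($k\in\Z$, $j\in\{1,2,3\}$) of $\H$. Since $W_s$ is by definition the linear span of the vectors $w=f(\partial_1)(\partial_1+\partial_2)^n$ with $n\in\Z_+$ and $\deg f\le s$, it suffices to verify $L_kw,I_kw,C_jw\in W_s$ for every such $w$. I would treat the first spanning set in full and obtain the second one, $\{(\partial_1+\partial_2)^nf(\partial_2)\}$, by interchanging the indices $1$ and $2$.

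First I would record the $\H$-action on the tensor product: using the diagonal action on $\Omega(\lambda,\alpha_1,\beta_1)\otimes\Omega(\lambda,\alpha_2,\beta_2)=\C[\partial_1,\partial_2]$ together with the displayed formulas for the action on each factor, one gets, for any $h\in\C[\partial_1,\partial_2]$,
$$L_kh=\lambda^k(\partial_1+k\alpha_1)\,h(\partial_1-k,\partial_2)+\lambda^k(\partial_2+k\alpha_2)\,h(\partial_1,\partial_2-k),$$
$$I_kh=\lambda^k\beta_1\,h(\partial_1-k,\partial_2)+\lambda^k\beta_2\,h(\partial_1,\partial_2-k),\qquad C_jh=0.$$
The fact that both factors share the \emph{same} $\lambda$ is exactly what makes the two summands carry the common scalar $\lambda^k$; this is where $\lambda\in\C^*$ (rather than two distinct $\lambda_i$) is used, whereas the hypotheses on $\alpha_j,\beta_i$ only serve to guarantee that each $\Omega(\lambda,\alpha_i,\beta_i)$ is irreducible. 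The key elementary observation is that both substitutions $\partial_1\mapsto\partial_1-k$ and $\partial_2\mapsto\partial_2-k$ send $\partial_1+\partial_2$ to $(\partial_1+\partial_2)-k$, so that $w(\partial_1-k,\partial_2)=f(\partial_1-k)\bigl((\partial_1+\partial_2)-k\bigr)^n$ and $w(\partial_1,\partial_2-k)=f(\partial_1)\bigl((\partial_1+\partial_2)-k\bigr)^n$.

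The $I_k$- and $C_j$-cases are then immediate: $\deg f(\partial_1-k)\le s$, so expanding $\bigl((\partial_1+\partial_2)-k\bigr)^n$ by the binomial theorem writes $I_kw$ as a linear combination of terms $g(\partial_1)(\partial_1+\partial_2)^j$ with $\deg g\le s$, hence $I_kw\in W_s$, and $C_jw=0\in W_s$. The one step needing care is $L_kw$. Substituting the two expressions above into the formula for $L_k$ and rewriting $\partial_2=(\partial_1+\partial_2)-\partial_1$ in the coefficient $\partial_2+k\alpha_2$ gives
$$L_kw=\lambda^k(\partial_1+\partial_2)f(\partial_1)\bigl((\partial_1+\partial_2)-k\bigr)^n+\lambda^k\,g_k(\partial_1)\bigl((\partial_1+\partial_2)-k\bigr)^n,$$
where $g_k(\partial_1):=(\partial_1+k\alpha_1)f(\partial_1-k)-\partial_1f(\partial_1)+k\alpha_2f(\partial_1)$. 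The first summand is $f(\partial_1)$ times a polynomial in $\partial_1+\partial_2$ and so lies in $W_s$; for the second summand, the crux is that the potentially dangerous degree-$(s+1)$ contributions $\partial_1f(\partial_1-k)$ and $-\partial_1f(\partial_1)$ have equal leading coefficients and therefore cancel, whence $\deg g_k\le s$, and expanding the binomial once more exhibits $g_k(\partial_1)\bigl((\partial_1+\partial_2)-k\bigr)^n$ as a combination of $\tilde g(\partial_1)(\partial_1+\partial_2)^j$ with $\deg\tilde g\le s$, i.e. in $W_s$.

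The main obstacle is precisely this bookkeeping: one must see that $L_k$ does not raise the degree in $\partial_1$ above $s$, and the only reason it does not is the cancellation just described (which is why the decomposition $\partial_2=(\partial_1+\partial_2)-\partial_1$ is the right move). Everything else is a routine binomial expansion. For the second spanning set $\{(\partial_1+\partial_2)^nf(\partial_2)\}$ the identical argument applies after swapping $1\leftrightarrow2$, now rewriting $\partial_1=(\partial_1+\partial_2)-\partial_2$ in the coefficient $\partial_1+k\alpha_1$; this completes the proof that $W_s$ is an $\H$-submodule.
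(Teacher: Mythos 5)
Your proof is correct and follows the same strategy as the paper's: direct verification that each generator of $\H$ preserves $W_s$, with your $I_k$ computation identical to the one given there (the action collapses to $\lambda^k\bigl(\beta_1f(\partial_1-k)+\beta_2f(\partial_1)\bigr)(\partial_1+\partial_2-k)^n$). The only difference is that for the $L_k$ step the paper simply invokes Theorem 9 of Tan--Zhao \cite{TZ1} (the underlying Virasoro computation), whereas you carry it out explicitly via the rewriting $\partial_2=(\partial_1+\partial_2)-\partial_1$ and the cancellation of the degree-$(s+1)$ terms in $(\partial_1+k\alpha_1)f(\partial_1-k)-\partial_1f(\partial_1)+k\alpha_2f(\partial_1)$ --- a correct, self-contained substitute for that citation.
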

\begin{proof}
Without loss of generality, we may assume $\lambda=1.$
For any $n\in\Z_+,f(\partial_1)\in W_s,k\in\Z$, it is easy to get
\begin{eqnarray*}
 &&I_k\big(f(\partial_1)(\partial_1+\partial_2)^n\big)
=I_k\big(\sum_{i=0}^n\binom{n}{i}f(\partial_1)\partial_1^{i}\partial_2^{n-i}\big)
\\&=&\sum_{i=0}^n\binom{n}{i}\Big(\beta_1f(\partial_1-k)(\partial_1-k)^{i}\partial_2^{n-i}+\beta_2f(\partial_1)\partial_1^{i}(\partial_2-k)^{n-i}\Big)
\\&=&\big(\beta_1f(\partial_1-k)+\beta_2f(\partial_1)\big)(\partial_1+\partial_2-k)^n\in  W_s.
\end{eqnarray*}
By Theorem 9 of \cite{TZ1},  we have
$L_k\big( f(\partial_1)(\partial_1+\partial_2)^n\big)
 \in W_s.$
 By the similar method, we obtain
$L_k\big((\partial_1+\partial_2)^nf(\partial_2)\big)\in  W_s$ and $I_k\big((\partial_1+\partial_2)^nf(\partial_2)\big)\in  W_s$.
Thus, $ W_s$ is a submodule of $\Omega(\lambda,\alpha_1,\beta_1)\otimes\Omega(\lambda,\alpha_2,\beta_2)$, completing the proof.
\end{proof}

\begin{coro}\label{coro3}
Let $S=\{1,2\},S^\prime\subseteq S,\lambda\in\C^*,\beta_i,\alpha_j\in\C^*,\beta_j=0$ for $i\in S^\prime,j\in S\setminus S^\prime$  and $s\in\Z_+$.
 Assume that $W_s$ is the subspace of $\Omega(\lambda,\alpha_1,\beta_1)\otimes\Omega(\lambda,\alpha_2,\beta_2)$,
  where $W_s$ is spanned by $\{f(\partial_1)(\partial_1+\partial_2)^n\mid n\in\Z_+,0\leq\mathrm{deg}(f)\leq s\}$ or $\{(\partial_1+\partial_2)^nf(\partial_2)\mid n\in\Z_+,0\leq\mathrm{deg}(f)\leq s\}$.
 Then  $\Omega(\lambda,\alpha_1,\beta_1)\otimes\Omega(\lambda,\alpha_2,\beta_2)$
has a series of $\L$-submodules
$$ W_1\subset W_2\subset\cdots \subset W_s\subset\cdots$$ such that $W_s/W_{s-1}\cong \Omega\big(\lambda,s+\alpha_1+\alpha_2,\beta_1+\beta_2\big)$  as $\L$-module for each $s\geq1$.
\end{coro}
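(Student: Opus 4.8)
The plan is to establish the claimed filtration directly from Lemma \ref{lemm2} together with a careful analysis of the induced action on successive quotients. First I would invoke Lemma \ref{lemm2} to guarantee that each $W_s$ (say in the form spanned by $\{f(\partial_1)(\partial_1+\partial_2)^n\mid n\in\Z_+,\ 0\leq\deg(f)\leq s\}$) is a submodule of $\Omega(\lambda,\alpha_1,\beta_1)\otimes\Omega(\lambda,\alpha_2,\beta_2)$, and it is immediate from the spanning description that $W_1\subset W_2\subset\cdots\subset W_s\subset\cdots$ is an increasing chain. So the only substantive content is the identification $W_s/W_{s-1}\cong\Omega(\lambda,s+\alpha_1+\alpha_2,\beta_1+\beta_2)$ as an $\L$-module for each $s\geq 1$.

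To identify the quotient, I would fix $s\geq 1$ and observe that $W_s/W_{s-1}$ has a natural basis given by the images of $\partial_1^{\,s}(\partial_1+\partial_2)^n$ for $n\in\Z_+$ (any $f$ of degree exactly $s$ differs from a scalar multiple of $\partial_1^{\,s}$ by something of lower degree, hence lies in the same coset modulo $W_{s-1}$; and the $(\partial_1+\partial_2)^n$ are linearly independent). Write $\bar w_n$ for the image of $\partial_1^{\,s}(\partial_1+\partial_2)^n$. Next I would compute the $L_k$- and $I_k$-actions on $\partial_1^{\,s}(\partial_1+\partial_2)^n$ and reduce modulo $W_{s-1}$. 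From the computation already displayed in the proof of Lemma \ref{lemm2}, $I_k\big(\partial_1^{\,s}(\partial_1+\partial_2)^n\big)=\big(\beta_1(\partial_1-k)^s+\beta_2\partial_1^{\,s}\big)(\partial_1+\partial_2-k)^n$; expanding $(\partial_1-k)^s=\partial_1^{\,s}-sk\partial_1^{\,s-1}+\cdots$, the lower-order terms in $\partial_1$ land in $W_{s-1}$, so modulo $W_{s-1}$ this equals $(\beta_1+\beta_2)\,\partial_1^{\,s}(\partial_1+\partial_2-k)^n$, i.e. $I_k\bar w_n=(\beta_1+\beta_2)\bar w'$ where $\bar w'$ corresponds to $(\partial_1+\partial_2-k)^n$. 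Renaming $\partial:=\partial_1+\partial_2$ on the quotient and noting $(\partial-k)^n$ is exactly the expansion appearing in the $\Omega$-action, this matches $I_k\partial^n=\lambda^k(\beta_1+\beta_2)(\partial-k)^n$ once one tracks the $\lambda^k$ factor (recall the reduction $\lambda=1$ in Lemma \ref{lemm2} is only for notational convenience; the general $\lambda$ contributes the overall $\lambda^k$). For $L_k$, I would similarly use Theorem 9 of \cite{TZ1} (as invoked in Lemma \ref{lemm2}) to get the explicit formula for $L_k\big(\partial_1^{\,s}(\partial_1+\partial_2)^n\big)$; the leading term in $\partial_1$ will be $\partial_1^{\,s}$ times $\lambda^k\big(\partial+k(s+\alpha_1+\alpha_2)\big)(\partial-k)^n$ — the shift $s$ in the Euler-operator coefficient arising precisely because differentiating/scaling $\partial_1^{\,s}$ produces an extra additive $sk$ — with all remaining terms of $\partial_1$-degree $<s$, hence in $W_{s-1}$. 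This yields $L_k\bar w_n=\lambda^k\big(\partial+k(s+\alpha_1+\alpha_2)\big)(\partial-k)^n$ on the quotient, which is the defining $\L$-action of $\Omega(\lambda,s+\alpha_1+\alpha_2,\beta_1+\beta_2)$. The central elements $C_j$ act as $0$ on both sides, and since we are only claiming an $\L$-module (equivalently $\H$-module) isomorphism this is all that is needed. The map $\bar w_n\mapsto \partial^n$ is then a bijective $\L$-module homomorphism.

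The main obstacle I anticipate is bookkeeping the lower-order-in-$\partial_1$ terms precisely enough to be sure they all lie in $W_{s-1}$ rather than merely in $W_s$ — in particular, after acting by $L_k$, terms of the shape $g(\partial_1)(\partial_1+\partial_2)^{n'}$ with $\deg g\leq s$ appear, and one must check that the \emph{degree-$s$} part of $g$ is a constant multiple of $\partial_1^{\,s}$ times the expected $(\partial_1+\partial_2)$-polynomial, with genuinely lower degree otherwise; this is where the explicit form of Theorem 9 of \cite{TZ1} does the work. A secondary point to handle carefully is that the statement of the corollary covers both presentations of $W_s$ (the $f(\partial_1)$ version and the $(\partial_1+\partial_2)^n f(\partial_2)$ version) as well as the three cases $S'=\emptyset$, $S'=S$, $S'=\{1\}$ or $\{2\}$ governed by which of $\beta_1,\beta_2$ vanish; the argument above is uniform in these, since it only uses $\beta_1+\beta_2$ and $\alpha_1+\alpha_2$, but I would remark explicitly that the symmetric computation with $\partial_2$ in place of $\partial_1$ gives the $(\partial_1+\partial_2)^n f(\partial_2)$ case verbatim, and that when some $\beta_j=0$ the corresponding term simply drops out without affecting the leading-order analysis. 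Finally I would note $W_0$ should be interpreted as $\{0\}$ (or the convention that the chain starts at $W_1$), so that $W_1\cong\Omega(\lambda,1+\alpha_1+\alpha_2,\beta_1+\beta_2)$ is the base case, consistent with $s=1$.
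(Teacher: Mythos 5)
Your proposal is correct and follows essentially the same route as the paper: it reduces to computing $I_k$ and $L_k$ on the coset representatives $\partial_1^{\,s}(\partial_1+\partial_2)^n$ modulo $W_{s-1}$ (using Lemma \ref{lemm2} for the submodule property and the cited result of \cite{TZ1} for the $L_k$-formula), and then matching the induced action on the quotient with the defining action of $\Omega(\lambda,s+\alpha_1+\alpha_2,\beta_1+\beta_2)$. Your additional bookkeeping of the lower-degree-in-$\partial_1$ terms and the explicit basis of the quotient only makes the same argument more detailed, not different.
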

\begin{proof}
For $s,n\in\Z_+,k\in\Z$, it follows from Lemma \ref{lemm2} that  we check
\begin{eqnarray*}
 &&I_k\big(\partial_1^s(\partial_1+\partial_2)^n\big)
 =I_k\big(\sum_{i=0}^n\binom{n}{i}\partial_1^{i+s}\partial_2^{n-i}\big)
 \\&\equiv&\lambda^k(\beta_1+\beta_2)\partial_1^s(\partial_1+\partial_2-k)^n\quad (\mathrm{mod}\  W_{s-1}).
\end{eqnarray*}
From Corollary 10 of \cite{TZ1}, we get
\begin{eqnarray*}
 &&L_k\big(\partial_1^s(\partial_1+\partial_2)^n\big)\equiv\lambda^k\partial_1^s\big(\partial_1+\partial_2-k(s+\alpha_0+\alpha_1)\big)(\partial_1+\partial_2-k)^n\quad (\mathrm{mod}\  W_{s-1}).
\end{eqnarray*}
By the similar method, we have
$I_k\big(\partial_1^s(\partial_1+\partial_2)^n\big)
 \equiv\lambda^k(\beta_1+\beta_2)\partial_1^s(\partial_1+\partial_2-k)^n (\mathrm{mod}\  W_{s-1}).
$
and
$L_k\big(\partial_1^s(\partial_1+\partial_2)^n\big)\equiv\lambda^k\partial_1^s\big(\partial_1+\partial_2-k(s+\alpha_0+\alpha_1)\big)(\partial_1+\partial_2-k)^n (\mathrm{mod}\  W_{s-1}).$

These show  that  the $\L$-module isomorphism $ W_s/W_{s-1}\cong \Omega(\lambda,s+\alpha_1+\alpha_2,\beta_1+\beta_2).$
\end{proof}
By the similar method in Lemma 3 of \cite{TZ1}, we get the following results.
\begin{lemm}\label{lemm431}
Let $m\in\N,\lambda_i\in\C^*,\alpha_i,\beta_i\in\C$ for $i=1,2,\ldots,m$ with the
$\lambda_i$ pairwise distinct. Then $\underbrace{1\otimes\cdots\otimes1}_{m}\otimes v$ generates the $\H$-module $\big(\bigotimes_{i=1}^m\Omega(\lambda_i,\alpha_i,\beta_i)\big)\otimes\mathrm{Ind}(M)$.
\end{lemm}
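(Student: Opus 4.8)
The plan is to show that the $\H$-submodule $N$ generated by $\mathbf 1\otimes v$ is all of $T\otimes\mathrm{Ind}(M)$, where I abbreviate $\mathbf 1=\underbrace{1\otimes\cdots\otimes1}_{m}$, $T=\bigotimes_{i=1}^m\Omega(\lambda_i,\alpha_i,\beta_i)=\C[\partial_1,\ldots,\partial_m]$, $\partial^{\mathbf p}=\partial_1^{p_1}\cdots\partial_m^{p_m}$ and $|\mathbf p|=p_1+\cdots+p_m$ for $\mathbf p=(p_1,\ldots,p_m)\in\Z_+^m$; here $v$ is taken to be a cyclic vector of $\mathrm{Ind}(M)$ (one may take $0\neq v\in M$, so that $\mathcal{U}(\H)v=\mathrm{Ind}(M)$). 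Since the $\partial^{\mathbf p}\otimes u$ with $\mathbf p\in\Z_+^m$ and $u\in\mathrm{Ind}(M)$ span $T\otimes\mathrm{Ind}(M)$, it suffices to put every one of them into $N$. The whole argument runs on the coproduct action: for $k\in\Z$,
\[
I_k(\partial^{\mathbf p}\otimes u)=\sum_{i=1}^m\lambda_i^k\beta_i\,(\partial_i-k)^{p_i}\prod_{l\neq i}\partial_l^{p_l}\otimes u+\partial^{\mathbf p}\otimes I_ku,
\]
\[
L_k(\partial^{\mathbf p}\otimes u)=\sum_{i=1}^m\lambda_i^k\,(\partial_i+k\alpha_i)(\partial_i-k)^{p_i}\prod_{l\neq i}\partial_l^{p_l}\otimes u+\partial^{\mathbf p}\otimes L_ku .
\]
After expanding in powers of $k$, the $T$-component of each summand has total degree at most $|\mathbf p|+1$ (at most $|\mathbf p|$ in the $I_k$ case), and the only contribution of degree exactly $|\mathbf p|+1$ is the term $\lambda_i^k\,\partial^{\mathbf p+\mathbf e_i}\otimes u$ in the $L_k$ formula, with a \emph{scalar} (not polynomial in $k$) coefficient, $\mathbf e_i$ being the $i$-th unit vector.

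First I would treat $u=v$ and claim $\partial^{\mathbf p}\otimes v\in N$ for all $\mathbf p$, by induction on $d=|\mathbf p|$, the case $d=0$ being immediate. Given $|\mathbf p|=d$, I choose $k$ large enough that $L_kv=0$ (legitimate under the standing conditions on $M$, which force $L_kM=0$ for $k\gg0$; cf. Theorem \ref{th2.1}) and examine $L_k(\partial^{\mathbf p}\otimes v)$: all of its terms of $T$-degree $\le d$ are $\C$-linear combinations of elements $\partial^{\mathbf q}\otimes v$ with $|\mathbf q|\le d$, which lie in $N$ by the inductive hypothesis, so subtracting them leaves $\sum_{i=1}^m\lambda_i^k\,\partial^{\mathbf p+\mathbf e_i}\otimes v\in N$ for all sufficiently large $k$. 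Since the $\lambda_i$ are pairwise distinct, Proposition \ref{pro2.3} (applied with the constant polynomials $f_{i,0}\equiv1$) forces $\partial^{\mathbf p+\mathbf e_i}\otimes v\in N$ for every $i$; as every exponent vector of total degree $d+1$ is of this form, the induction closes.

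Next I would propagate this through $\mathrm{Ind}(M)$. Writing an arbitrary $u\in\mathcal{U}(\H)v$ as a combination of words $x_{j_1}\cdots x_{j_r}v$ with $x_j\in\{L_k,I_k:k\in\Z\}$, I induct on $r$, the base $r=0$ being the previous step. If $\partial^{\mathbf q}\otimes u'\in N$ for \emph{all} $\mathbf q$, then for any $x\in\{L_k,I_k\}$ the coproduct formulas express $x(\partial^{\mathbf p}\otimes u')$ as $\partial^{\mathbf p}\otimes xu'$ plus a $\C$-linear combination of elements $\partial^{\mathbf q}\otimes u'$ with $|\mathbf q|\le|\mathbf p|+1$, all of which are already in $N$; hence $\partial^{\mathbf p}\otimes xu'\in N$ for all $\mathbf p$. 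Iterating this along words of length $r+1$ finishes the induction, so that $\partial^{\mathbf p}\otimes u\in N$ for all $\mathbf p$ and all $u\in\mathrm{Ind}(M)$, and therefore $N=T\otimes\mathrm{Ind}(M)$.

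The step that needs the most care is the degree bookkeeping that makes both inductions close: one must verify that $L_k$ raises the $\partial$-degree by exactly one with a leading coefficient that is a nonzero \emph{scalar} times $\lambda_i^k$ and carries no stray polynomial factor in $k$, since it is precisely this that allows Proposition \ref{pro2.3} to separate the distinct $\lambda_i$ once the lower-degree corrections have been discarded. A subsidiary point is the passage to ``$k$ large'' in the first induction, which rests on $L_kM=0$ for $k\gg0$ (part of the conditions making $\mathrm{Ind}(M)$ well behaved, cf. Theorem \ref{th2.1}). This is the same mechanism as Lemma~3 of \cite{TZ1}; the Heisenberg generators $I_k$ only help, since they act on each $\Omega$-factor by a shift composed with a scalar and never increase the $\partial$-degree.
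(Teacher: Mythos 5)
Your proof is correct and follows exactly the mechanism the paper itself invokes (it gives no proof, only the remark that the argument is ``the similar method in Lemma 3 of \cite{TZ1}''): an induction on the total $\partial$-degree using $L_k$ for $k$ large enough that $L_kv=0$, with Proposition \ref{pro2.3} separating the pairwise distinct $\lambda_i$ in the leading term $\sum_i\lambda_i^k\,\partial^{\mathbf p+\mathbf e_i}\otimes v$, followed by propagation through $\mathrm{Ind}(M)=\mathcal{U}(\H)v$ by induction on word length. The degree bookkeeping you flag is right: the coefficient of $\partial^{\mathbf p+\mathbf e_i}$ in $\lambda_i^k(\partial_i+k\alpha_i)(\partial_i-k)^{p_i}$ is the scalar $\lambda_i^k$ with no stray power of $k$, so the separation goes through.
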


Now we  are ready to prove  the irreducibility of $\H$-module $\big(\bigotimes_{i=1}^m\Omega(\lambda_i,\alpha_i,\beta_i)\big)\otimes \mathrm{Ind}(M)$.
\begin{theo}\label{th1}
Let $m\in\N,S=\{1,\ldots,m\},S^\prime\subseteq S$ and $\lambda_i\in\C^*$ for $i\in S$  with the
$\lambda_i$ pairwise distinct.
Let $\beta_i,\alpha_j\in\C^*,\beta_j=0$ for $i\in S^\prime,j\in S\setminus S^\prime$.  Assume   $\mathrm{Ind}(M)$ is  an  $\H$-module    defined by \eqref{ind2.2} for which $M$
satisfies the conditions in Theorem \ref{th2.1}. Then the tensor product $\big(\bigotimes_{i=1}^m\Omega(\lambda_i,\alpha_i,\beta_i)\big)\otimes\mathrm{Ind}(M)$ is an irreducible $\H$-module.
\end{theo}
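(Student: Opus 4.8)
The plan is to argue by contradiction: let $X$ be a nonzero submodule of $\mathbf{W}:=\big(\bigotimes_{i=1}^m\Omega(\lambda_i,\alpha_i,\beta_i)\big)\otimes\mathrm{Ind}(M)$, and show $X=\mathbf{W}$. By Lemma \ref{lemm431} it suffices to produce an element of the form $1\otimes\cdots\otimes 1\otimes v$ with $0\neq v\in\mathrm{Ind}(M)$ inside $X$. First I would pick $0\neq w\in X$ of minimal degree $\mathrm{deg}(w)=(p_1,\ldots,p_m)$ with respect to the total order $\prec$, and aim to show $\mathrm{deg}(w)=\mathbf{0}$. Write $w=\sum_{\mathbf{p}\in I}\partial_1^{p_1}\cdots\partial_m^{p_m}\otimes v_{\mathbf{p}}$ with leading term indexed by the maximal $\mathbf{p}$.

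The heart of the argument is a reduction step: apply the operators $L_k$ and $I_k$ (for suitable $k\in\Z$, eventually taking $k$ large) to $w$ and extract the leading behaviour in $k$. Because $L_k$ acts on the $i$-th tensor factor through $\lambda_i^k\big(\partial_i+k\alpha_i\big)(\partial_i-k)^{n}$ and on $\mathrm{Ind}(M)$ in the usual PBW way, each coefficient in $L_k w$ is a sum of terms of the shape $\big(\prod_i\lambda_i^{k}\big)\cdot(\text{polynomial in }k)\cdot(\text{vector})$, i.e. exactly the form controlled by Proposition \ref{pro2.3}. Using the pairwise distinctness of the $\lambda_i$ together with Proposition \ref{pro2.3} — and, for the "degree drop" on a single $\Omega(\lambda,\alpha,\beta)$ factor, the filtration computation recorded in Corollary \ref{coro3} and the cited results of \cite{TZ1} — I would show that if some $p_i>0$ then one can build an element of $X$ of strictly smaller degree, contradicting minimality. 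This forces $\mathrm{deg}(w)=\mathbf{0}$, so $w=1\otimes\cdots\otimes 1\otimes v$ for some $0\neq v\in\mathrm{Ind}(M)$.

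Once $X$ contains $1\otimes\cdots\otimes1\otimes v$, I would use the hypothesis that $M$ satisfies the conditions of Theorem \ref{th2.1}, so $\mathrm{Ind}(M)$ is a simple $\H$-module and $I_n,L_m$ act locally nilpotently on it for $n>k$, $m>k+e$. Acting on $1\otimes\cdots\otimes1\otimes v$ by elements of $\mathcal{U}(\H)$: on the $\Omega$-factors the operators $L_k,I_k$ fix $1\otimes\cdots\otimes1$ up to scalars $\lambda_i^k(\text{constant})$ (since $(\partial_i-k)^0=1$), so modulo lower-degree corrections the action is essentially the action on $\mathrm{Ind}(M)$ twisted by Laurent monomials in the $\lambda_i$; invoking Proposition \ref{pro2.3} again to separate the $\lambda_i$-exponentials, one peels off the pure $\mathrm{Ind}(M)$-action and concludes $1\otimes\cdots\otimes1\otimes v'\in X$ for all $v'\in\mathcal{U}(\H)v=\mathrm{Ind}(M)$. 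Then Lemma \ref{lemm431} gives $X=\mathbf{W}$.

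The main obstacle I anticipate is the degree-reduction step when several indices $p_i$ are simultaneously positive and the $\alpha_i,\beta_i$ are in the degenerate range ($\beta_j=0$ for $j\in S\setminus S'$): here the naive leading term in $k$ of $L_k w$ or $I_k w$ can vanish, and one must instead combine $L_k$ with $L_0$ (or with $\partial_i$-type operators coming from $L_0^{\pm1}$, cf. the proof of the isomorphism theorem above) and use Corollary \ref{coro3}'s identification $W_s/W_{s-1}\cong\Omega(\lambda,s+\alpha_1+\alpha_2,\beta_1+\beta_2)$ to control what survives. Handling this bookkeeping uniformly in $m$ — rather than just for $|S|=2$ as in Lemma \ref{lemm2} and Corollary \ref{coro3} — is where the real work lies; the rest is an application of Proposition \ref{pro2.3} and Theorem \ref{th2.1}.
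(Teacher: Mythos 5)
Your proposal follows essentially the same route as the paper: take a nonzero submodule, pick an element of minimal degree with respect to $\prec$, apply $I_k$ and $L_k$ for $k$ large enough that they annihilate the $\mathrm{Ind}(M)$-components, separate the contributions of the distinct $\lambda_i$ via Proposition \ref{pro2.3} to force the degree down to $\mathbf{0}$, and then finish with Lemma \ref{lemm431} and the simplicity of $\mathrm{Ind}(M)$. That is exactly the paper's argument.

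The one place you go astray is the ``main obstacle'' you flag at the end. Under the hypotheses of the theorem there is no degenerate case requiring Corollary \ref{coro3} or auxiliary operators built from $L_0$: for every index $i'$ either $i'\in S'$, in which case $\beta_{i'}\in\C^*$ and one applies $I_k$ and reads off the coefficient of $\lambda_{i'}^k k^{p_{i'}}$, whose leading scalar is $(-1)^{p_{i'}}\beta_{i'}\neq0$; or $i'\in S\setminus S'$, in which case $\alpha_{i'}\in\C^*$ and one applies $L_k$ and reads off the coefficient of $\lambda_{i'}^k k^{p_{i'}+1}$, whose leading scalar is $(-1)^{p_{i'}}\alpha_{i'}\neq0$. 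This simple case split (which is precisely why the theorem assumes $\beta_i\in\C^*$ on $S'$ and $\alpha_j\in\C^*$ on $S\setminus S'$) is the whole content of the reduction step; Corollary \ref{coro3} concerns the situation of \emph{coincident} parameters $\lambda$, which is excluded here and plays no role in this proof.
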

\begin{proof}
 For any $w\in \mathrm{Ind}(M)$,  there exists $K(w)\in\Z_+$ such that $L_k\cdot w=I_k\cdot w=0$ for all $k\geq K(w)$ by Theorem \ref{th2.1}. Suppose $W$ is a nonzero submodule of $\big(\bigotimes_{i=1}^m\Omega(\lambda_i,\alpha_i,\beta_i)\big)\otimes\mathrm{Ind}(M)$.
Choose a nonzero element $u\in W$ with  minimal degree. We claim that $\mathrm{deg}(u)=0$. If  not, we assume
\begin{equation*}
u=\sum_{\mathbf{p}\in I} \partial_1^{p_1}\cdots\partial_m^{p_m}\otimes w_{\mathbf{p}}\in W,
\end{equation*}
where $I$ is a finite subset of $\Z_+^m$ and $w_{\mathbf{p}}$ are  nonzero vectors of $\mathrm{Ind}(M)$.
Let $\partial_1^{p_1}\cdots\partial_m^{p_m}\otimes w_{\mathbf{p}}$ be  maximal among the terms in the sum  with respect to $``\prec"$ and let $i^\prime$ be minimal such that   $p_{i^\prime}>0$.

First we consider $i^\prime\in S^\prime$.
For enough large $k\in\Z$, we obtain
\begin{equation}\label{4.2}
I_k\big(\sum_{\mathbf{p}\in I} \partial_1^{p_1}\cdots\partial_m^{p_m}\otimes w_{\mathbf{p}}\big)=\sum_{i=1}^m\sum_{\mathbf{p}\in I} \partial_1^{p_1}\cdots\lambda_i^k\beta_i(\partial_i-k)^{p_i}\cdots\partial_m^{p_m}\otimes w_{\mathbf{p}}\in W,
\end{equation}
where $I$ is a finite subset of $\Z_+^m$ and $w_{\mathbf{p}}$ are  nonzero vectors of $\mathrm{Ind}(M)$.
Now we consider $i^\prime\in S\setminus S^\prime$. For enough large $k\in\Z$, one can easily to get
\begin{equation}\label{4.1}
L_k\big(\sum_{\mathbf{p}\in I} \partial_1^{p_1}\cdots\partial_m^{p_m}\otimes w_{\mathbf{p}}\big)=\sum_{i=1}^m\sum_{\mathbf{p}\in I} \partial_1^{p_1}\cdots\lambda_i^k(\partial_i+k\alpha_i)(\partial_i-k)^{p_i}\cdots\partial_m^{p_m}\otimes w_{\mathbf{p}}\in W,
\end{equation}
where $I$ is a finite subset of $\Z_+^m$ and $w_{\mathbf{p}}$ are  nonzero vectors of $\mathrm{Ind}(M)$.

By Proposition \ref{pro2.3}, we respectively consider the coefficient of $\lambda_{i^\prime}^kk^{p_{i^\prime}}$ and $\lambda_{i^\prime}^kk^{p_{i^\prime}+1}$ in \eqref{4.2} and \eqref{4.1}, one has
\begin{equation*}
\partial_1^{{p}_1}\cdots\partial_{i^\prime-1}^{{p}_{i^\prime-1}}\partial_{i^\prime+1}^{{p}_{i^\prime+1}}\cdots\partial_m^{{p}_m}\otimes w_{{\mathbf{p}}}\in W,
\end{equation*}
where $m\in\N,w_{{\mathbf{p}}}$ are nonzero vectors of $\mathrm{Ind}(M)$.
Then
$$\partial_1^{{p}_1}\cdots\partial_{i^\prime-1}^{{p}_{i^\prime-1}}\partial_{i^\prime+1}^{{p}_{i^\prime+1}}\cdots\partial_m^{{p}_m}\otimes w_{{\mathbf{p}}}+\mathrm{lower\ terms}$$
has lower degree than $u$, which is contrary to the choice of $u$. Hence, $\mathrm{deg}(u)=0$.

By Lemma \ref{lemm431}, we see that $\big(\bigotimes_{i=1}^m\Omega(\lambda_i,\alpha_i,\beta_i)\big)\otimes w_{\mathbf{0}}$  can  be generated by  $\underbrace{1\otimes\cdots\otimes1}_{m}\otimes w_{\mathbf{0}}$.
It follows that $\big(\bigotimes_{i=1}^m\Omega(\lambda_i,\alpha_i,\beta_i)\big)\otimes \mathcal U(\H) w_\mathbf{0}\subseteq W.$ Thus,
 $W=\big(\bigotimes_{i=1}^m\Omega(\lambda_i,\alpha_i,\beta_i)\big)\otimes\mathrm{Ind}(M)$, since the nonzero $\H$-submodule $\mathcal U(\H) w_\mathbf{0}$ of ${\rm Ind}(M)$  generated by $w_\mathbf{0}$ is equal to ${\rm Ind}(M)$ by the irreducibility of ${\rm Ind}(M)$. This completes the proof of Theorem \ref{th1}.
\end{proof}
\begin{remark}
When $S^\prime={\O}$ in Theorem \ref{th1}, it was studied in \cite{TZ1}.
\end{remark}
It follows from Lemma \ref{lemm2} and Theorem \ref{th1} that we have the following remark.
\begin{remark}
Let $m\in\N,S=\{1,\ldots,m\},S^\prime\subseteq S$ and $\lambda_i\in\C^*$ for $i\in S$.
Let $\beta_i,\alpha_j\in\C^*,\beta_j=0$ for $i\in S^\prime,j\in S\setminus S^\prime$.  Assume   $\mathrm{Ind}(M)$ is  an  $\H$-module    defined by \eqref{ind2.2} for which $M$
satisfies the conditions in Theorem \ref{th2.1}. Then the tensor product $\big(\bigotimes_{i=1}^m\Omega(\lambda_i,\alpha_i,\beta_i)\big)\otimes\mathrm{Ind}(M)$ is an irreducible $\H$-module if and only if the
$\lambda_i$ pairwise distinct.
\end{remark}

\section{Isomorphism classes}
In this section, we will give isomorphism results for modules  $\big(\bigotimes_{i=1}^m\Omega(\lambda_i,\alpha_i,\beta_i)\big)\otimes \mathrm{Ind}(M)$.
We denote the number of elements in set $A$ by $\mathrm{card}(A)$.

\begin{theo}\label{th2}
Let   $m,n\in\N,S=\{1,\ldots,m\},T=\{1,\ldots,n\},S^\prime\subseteq S,T^\prime\subseteq T,\lambda_i,\mu_j\in\C^*$  with the
$\lambda_i$ pairwise distinct as well as $\mu_j$ pairwise distinct    for $i\in S,j\in T$.
Let $\beta_{i^\prime},\alpha_i\in\C^*,\beta_{i}=0$ and $d_{j^\prime},c_{j}\in\C^*,d_{j}=0$ for
$i^\prime\in S^\prime,{i}\in S\setminus S^\prime,j^\prime\in T^\prime,j\in T\setminus T^\prime$.
 Assume $\mathrm{Ind}(M_1)$ and $\mathrm{Ind}(M_2)$ are   $\H$-modules  defined by \eqref{ind2.2} for which $M_1$ and $M_2$ satisfy the conditions in Theorem $\ref{th2.1}$.  Then   $\big(\bigotimes_{i=1}^m\Omega(\lambda_i,\alpha_i,\beta_i)\big)\otimes \mathrm{Ind}(M_1)$ and   $\big(\bigotimes_{j=1}^n\Omega(\mu_j,c_j,d_j)\big)\otimes \mathrm{Ind}(M_2)$  are isomorphic
as $\H$-modules if and only if  $m=n,\mathrm{card}(S^\prime)=\mathrm{card}(T^\prime),\mathrm{Ind}(M_1)\cong \mathrm{Ind}(M_2)$
 as $\H$-modules and $(\lambda_{i},\alpha_{i},\beta_{i})=(\mu_{i^\prime},c_{i^\prime},d_{i^\prime})$  and $(\lambda_{j},\alpha_{j})=(\mu_{j^\prime},c_{j^\prime}),\beta_{j}=d_{j^\prime}=0$ for $i\in S^\prime,i^\prime\in T^\prime,j\in S\setminus S^\prime,j^\prime\in T\setminus T^\prime$ $\mathrm{(}$$\varphi_1:S^\prime\rightarrow T^\prime \ and\ \varphi_2:S\setminus S^\prime\rightarrow T\setminus T^\prime$ are both bijections $\mathrm{)}$.
\end{theo}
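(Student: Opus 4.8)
The plan is to prove the ``if'' direction first (which is essentially routine) and then concentrate on the ``only if'' direction, which is where all the work lies. For the sufficiency, one observes that if $m=n$, $\operatorname{card}(S')=\operatorname{card}(T')$, the data of the $\Omega$-factors match up under the bijections $\varphi_1,\varphi_2$, and $\operatorname{Ind}(M_1)\cong\operatorname{Ind}(M_2)$ as $\H$-modules, then one simply permutes the tensor factors of $\bigotimes_{i=1}^m\Omega(\lambda_i,\alpha_i,\beta_i)$ using $\varphi_1\cup\varphi_2$ and tensors with the given isomorphism $\operatorname{Ind}(M_1)\xrightarrow{\sim}\operatorname{Ind}(M_2)$; since the $\lambda_i$ are pairwise distinct and likewise the $\mu_j$, this is a well-defined $\H$-module isomorphism.

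For the necessity, I would start from an $\H$-module isomorphism
$$\varphi:\Big(\bigotimes_{i=1}^m\Omega(\lambda_i,\alpha_i,\beta_i)\Big)\otimes\operatorname{Ind}(M_1)\;\xrightarrow{\ \sim\ }\;\Big(\bigotimes_{j=1}^n\Omega(\mu_j,c_j,d_j)\Big)\otimes\operatorname{Ind}(M_2).$$
The key technical tool is the asymptotic/degree analysis already used in the proof of Theorem \ref{th1} together with Proposition \ref{pro2.3}: applying operators $L_k$ (resp.\ $I_k$) for $k\to+\infty$ and extracting, via Proposition \ref{pro2.3}, the coefficients of $\lambda_i^k k^{p_i+1}$ (resp.\ $\mu_j^k k^{q_j}$) lets one compare ``leading behaviour'' on the two sides. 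First I would locate the vacuum-type vector: using Lemma \ref{lemm431}, $\underbrace{1\otimes\cdots\otimes1}_{m}\otimes v$ is a generator on the left, and its image under $\varphi$ must, by a minimality-of-degree argument identical to the one in Theorem \ref{th1}, be (up to scalar and up to the $\operatorname{Ind}$-part) of the form $\underbrace{1\otimes\cdots\otimes1}_{n}\otimes v'$; running the same argument for $\varphi^{-1}$ forces $m=n$ by a dimension/growth count on the set of ``leading exponentials'' $\{\lambda_1,\dots,\lambda_m\}$ versus $\{\mu_1,\dots,\mu_n\}$. Next, acting by $I_k$ and $L_k$ on $1\otimes\cdots\otimes1\otimes v$ and pushing through $\varphi$, I would read off — again via Proposition \ref{pro2.3} applied to the distinct scalars $\lambda_i$ — that the multiset $\{\lambda_i\}_{i\in S}$ equals $\{\mu_j\}_{j\in T}$, giving a bijection; matching the $I_k$-eigenvalue growth ($\beta_i\lambda_i^k$ against $d_j\mu_j^k$) shows that an index $i$ with $\beta_i\neq0$ must go to an index $j$ with $d_j\neq0$ and then $\beta_i=d_j$, while an index with $\beta_i=0$ (hence $\alpha_i\neq0$) must go to one with $d_j=0$; this yields $\operatorname{card}(S')=\operatorname{card}(T')$ and the bijections $\varphi_1:S'\to T'$, $\varphi_2:S\setminus S'\to T\setminus T'$ with $\lambda_i=\mu_{\varphi(i)}$ and the stated equalities $\beta_i=d_{\varphi_1(i)}$, $\beta_j=d_{\varphi_2(j)}=0$. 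Comparing the $L_k$-action (coefficients of $\lambda_i^k k^{p_i+1}$, i.e.\ the $k\alpha_i$ terms) then forces $\alpha_i=c_{\varphi(i)}$ on both pieces. Finally, once the $\Omega$-factors are identified, I would quotient both sides by the submodule generated by all vectors of positive $\partial$-degree (cf.\ Corollary \ref{coro3} and Lemma \ref{lemm2}, which describe exactly these submodules) to obtain induced isomorphisms on the ``top quotient'', which is an $\H$-module on $1\otimes\cdots\otimes1\otimes\operatorname{Ind}(M_i)$ and on which $\varphi$ restricts to an $\H$-isomorphism $\operatorname{Ind}(M_1)\cong\operatorname{Ind}(M_2)$.

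The main obstacle I anticipate is the bookkeeping in the Proposition \ref{pro2.3} step: because $\varphi$ need not respect the $\partial$-degree filtration, the image of $1\otimes\cdots\otimes1\otimes v$ is a priori a finite sum $\sum_{\mathbf q}\partial_1^{q_1}\cdots\partial_n^{q_n}\otimes v_{\mathbf q}'$ with nontrivial lower-order tails, and one must carefully track how $L_k$ and $I_k$ act on such a sum — each monomial $\partial_j^{q_j}$ contributes a polynomial-in-$k$ times $\mu_j^k$ factor — and then apply Proposition \ref{pro2.3} with $P=\operatorname{Ind}(M_2)$, $P_1=0$ (or $P_1$ a suitable submodule) to conclude term-by-term vanishing or matching. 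One has to do this simultaneously for enough values of $k$ and for both $L$ and $I$ so as to separate the distinct exponential rates $\lambda_i$ (resp.\ $\mu_j$) and then, within a fixed rate, separate the polynomial degrees; the fact that the $\lambda_i$ are pairwise distinct and the $\mu_j$ are pairwise distinct is exactly what makes Proposition \ref{pro2.3} applicable and is used repeatedly. A secondary subtlety is ensuring that the ``$\operatorname{Ind}$-part'' argument in the last step is clean: one needs that the $\H$-submodule of the left-hand module generated by $1\otimes\cdots\otimes1\otimes v$ (for $v$ ranging over $M_1\subseteq\operatorname{Ind}(M_1)$) has intersection with the degree-positive submodule equal to that submodule's worth of $\operatorname{Ind}$-tails, so that the quotient map genuinely realizes $\operatorname{Ind}(M_1)$; this follows from Lemma \ref{lemm431} and the explicit submodule structure of Corollary \ref{coro3}, but it should be spelled out.
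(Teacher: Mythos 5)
Your sufficiency argument and the bulk of your necessity argument --- applying $I_k$ and $L_k$ for large $k$ to $1\otimes\cdots\otimes1\otimes w$ and to its image, and invoking Proposition \ref{pro2.3} to separate the exponential rates $\lambda_i^k,\mu_j^k$ and the polynomial degrees in $k$ --- follow the paper's proof of Theorem \ref{th2} closely: this is exactly how the paper forces the exponents $p_{j'}$ in $\phi(1\otimes\cdots\otimes1\otimes w)$ to vanish, matches $\{\lambda_i\}$ with $\{\mu_j\}$, splits the matching into $S'\to T'$ and $S\setminus S'\to T\setminus T'$ according to whether the $I_k$-growth $\beta_i\lambda_i^k$ is present, and reads off $\beta_i=d_{i'}$ and $\alpha_i=c_{i'}$. (The paper obtains $m=n$ by adding $\mathrm{card}(S')=\mathrm{card}(T')$ to $\mathrm{card}(S\setminus S')=\mathrm{card}(T\setminus T')$ rather than by running the argument for $\varphi^{-1}$, but either route works.)

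The final step, however, is a genuine gap. You propose to obtain $\mathrm{Ind}(M_1)\cong\mathrm{Ind}(M_2)$ by quotienting each side by ``the submodule generated by all vectors of positive $\partial$-degree.'' No such proper submodule exists: by Theorem \ref{th1} the modules $\big(\bigotimes_i\Omega(\lambda_i,\alpha_i,\beta_i)\big)\otimes\mathrm{Ind}(M)$ are irreducible, and the span of the positive-degree monomials is not even $\H$-stable (for instance $L_k(\partial_1\otimes1\otimes\cdots\otimes1\otimes v)$ contains the degree-zero component $-k^2\alpha_1\lambda_1^k\,1\otimes\cdots\otimes1\otimes v$ when $\alpha_1\neq0$, and $I_k$ produces one when $\beta_1\neq0$), so the submodule it generates is the whole module and your quotient is $0$. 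Nor can Lemma \ref{lemm2} and Corollary \ref{coro3} be cited here: they describe submodules of $\Omega(\lambda,\alpha_1,\beta_1)\otimes\Omega(\lambda,\alpha_2,\beta_2)$ with \emph{equal} parameters $\lambda$, which is precisely the degenerate situation excluded by the hypothesis that the $\lambda_i$ (resp.\ $\mu_j$) are pairwise distinct. The correct conclusion is linear rather than module-theoretic: the Proposition \ref{pro2.3} analysis shows $\phi(1\otimes\cdots\otimes1\otimes w)=1\otimes\cdots\otimes1\otimes v_{\mathbf 0}$ for \emph{every} $w\in\mathrm{Ind}(M_1)$, hence $\phi$ restricts to a linear bijection $\tau:\mathrm{Ind}(M_1)\to\mathrm{Ind}(M_2)$; one then expands $\phi\big(X_k(1\otimes\cdots\otimes1\otimes w)\big)=X_k\big(1\otimes\cdots\otimes1\otimes\tau(w)\big)$ for $X=L,I$, cancels the terms coming from the $\Omega$-factors using the already-established equalities $\lambda_i=\mu_{i'}$, $\alpha_i=c_{i'}$, $\beta_i=d_{i'}$, and is left with $\tau(L_kw)=L_k\tau(w)$, $\tau(I_kw)=I_k\tau(w)$, so that $\tau$ is an $\H$-isomorphism. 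As written, your plan does not reach this conclusion.
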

\begin{proof}
The sufficiency is trivial.
We  denote $\Omega(\lambda_i,\alpha_i,\beta_i)=\C[\partial_i],
\Omega(\mu_j,c_j,d_j)=\C[\widetilde{\partial}_j],V_1=\big(\bigotimes_{i=1}^{m}\Omega(\lambda_i,\alpha_i,\beta_i)\big)\otimes\mathrm{Ind}(M_1)$ and
$V_2=\big(\bigotimes_{j=1}^{n}\Omega(\mu_j,c_j,d_j)\big)\otimes \mathrm{Ind}(M_2)$, respectively.

 Let $\phi$ be an isomorphism from $V_1$ to   $V_2$.
Take a nonzero element  $\underbrace{1\otimes\cdots\otimes1}_{m}\otimes w\in V_1$. Assume
\begin{equation}\label{ass222}
\phi(\underbrace{1\otimes\cdots\otimes1}_{m}\otimes w)=\sum\limits_{\mathbf{p}\in I}\widetilde{\partial_1}^{p_1}\cdots\widetilde{\partial_n}^{p_n}\otimes v_{\mathbf{p}},
\end{equation}
where $I$ is a finite subset of $\Z_+^n$ and $v_{\mathbf{p}}$ are  nonzero vectors of $V_2$.
There exists  a positive  integer $K=\mathrm{max}\{K(w),K(v_{\mathbf{p}})\mid \mathbf{p}\in \Z_+^n\}$ such that $I_m\cdot w=I_m\cdot v_{\mathbf{p}}=L_m\cdot w=L_m\cdot v_{\mathbf{p}}=0$
for all integers $m\geq K$ and $\mathbf{p}\in \Z_+^n$.

First consider $d_{j^\prime}\in\C^*,d_j=0$ for
$j^\prime\in T^\prime,j\in T\setminus T^\prime$.
We note that $m=\mathrm{card}(S^\prime)+\mathrm{card}(S\setminus S^\prime),n=\mathrm{card}(T^\prime)+\mathrm{card}(T\setminus T^\prime)$.
For enough large  $k\in\Z$,    we know that
\begin{eqnarray}\label{Lk3.3}
\sum_{j=1}^m \lambda_j^k\beta_j\phi(\underbrace{1\otimes\cdots\otimes1}_{m}\otimes w)
&=&\phi\big(I_k(\underbrace{1\otimes\cdots\otimes1}_{m}\otimes w)\big)=
I_k\phi(\underbrace{1\otimes\cdots\otimes1}_{m}\otimes w)\nonumber
\\&=&
\sum_{j^\prime=1}^n\sum\limits_{\mathbf{p}\in \Z_+^n}\widetilde{\partial_1}^{p_1}\cdots\mu_{j^\prime}^kd_{j^\prime}(\widetilde{\partial_{j^\prime}}-k)^{p_{j^\prime}}\cdots\widetilde{\partial_n}^{p_n}\otimes v_{\mathbf{p}}.
\end{eqnarray}
According to Proposition \ref{pro2.3} in \eqref{Lk3.3}, we get $p_{j^\prime}=0,\lambda_j=\mu_{j^\prime},\mathrm{card}(T^\prime)=\mathrm{card}(S^\prime)$, where $j\in S^\prime,j^\prime\in T^\prime$ and $\varphi_1:S^\prime\rightarrow T^\prime$ is bijection.
Then $\phi(\underbrace{1\otimes\cdots\otimes1}_{m}\otimes w)=\sum\limits_{\widehat{\mathbf{p}}\in I}\widetilde{\partial_1}^{\widehat{p_1}}\cdots\widetilde{\partial_n}^{\widehat{p_n}}\otimes v_{\widehat{\mathbf{p}}}$,
where $\widehat{p_{j^\prime}}=0$ for $j^\prime\in T^\prime$.
Now we consider $c_j\in\C^*$ for
$j\in T\setminus T^\prime$.
For enough large $k\in\Z$, it follows from  $\phi\big(L_k(\underbrace{1\otimes\cdots\otimes1}_{m}\otimes w)\big)=L_k\phi(\underbrace{1\otimes\cdots\otimes1}_{n}\otimes w)$ that we have
\begin{eqnarray}\label{4444.4}
&&\sum_{i=1}^m \Big(\lambda_i^k\phi(1\otimes\cdots\otimes\partial_i\otimes\cdots\otimes1\otimes w)+\lambda_i^kk\alpha_i\phi(\underbrace{1\otimes\cdots\otimes1}_{m}\otimes w)\Big)\nonumber
\\&=&\sum_{j=1}^n\sum\limits_{\widehat{\mathbf{p}}\in \Z_+^n}\widetilde{\partial_1}^{\widehat{p_1}}\cdots\mu_j^k(\widetilde{\partial_j}+kc_j)
(\widetilde{\partial_j}-k)^{\widehat{p_j}}\cdots\widetilde{\partial_n}^{\widehat{p_n}}\otimes v_{\widehat{\mathbf{p}}}.
\end{eqnarray}
Using Proposition \ref{pro2.3} in \eqref{4444.4}, one can easily  to check that  $\widehat{p_j}=0,\lambda_i=\mu_j,\mathrm{card}(T\setminus T^\prime)=\mathrm{card}(S\setminus S^\prime)$, where $i\in S\setminus S^\prime,j\in T\setminus T^\prime$  and $\varphi_2:S\setminus S^\prime\rightarrow T\setminus T^\prime$ is bijection. Then $m=\mathrm{card}(T\setminus T^\prime)+\mathrm{card}(T)=\mathrm{card}(S\setminus S^\prime)+\mathrm{card}(S)=n$
and
\begin{eqnarray*}
\phi\big(\underbrace{1\otimes\cdots\otimes1}_{m}\otimes w\big)
=\underbrace{1\otimes\cdots\otimes1}_{m}\otimes v_{\mathbf{0}}
\end{eqnarray*}
 Thus, \eqref{Lk3.3} can be rewritten   as
\begin{eqnarray*}
\sum_{i=1}^m \lambda_i^k\beta_i\phi\big(\underbrace{1\otimes\cdots\otimes1}_{m}\otimes w\big)
=\sum_{i=1}^m\mu_i^kd_i\big(\underbrace{1\otimes\cdots\otimes1}_{m}\otimes v_{\mathbf{0}}\big),
\end{eqnarray*}
we obtain
$\beta_i=d_{i^\prime}$ for $i\in S^\prime,i^\prime\in T^\prime$, which can be obtained by $\phi\big(\underbrace{1\otimes\cdots\otimes1}_{m}\otimes w\big)\neq0,\lambda_i=\mu_{i^\prime}$ and Proposition \ref{pro2.3}. We note that
$\beta_j=d_{j^\prime}=0$
for $j\in S\setminus S^\prime,j^\prime\in T\setminus T^\prime$. Then   for enough large $k\in\Z$, by   $\phi\big(L_k(\underbrace{1\otimes\cdots\otimes1}_{m}\otimes w)\big)=L_k(\underbrace{1\otimes\cdots\otimes1}_{m}\otimes v_{\mathbf{0}})$ and $\lambda_i=\mu_{i^\prime},\lambda_j=\mu_{j^\prime}$,
 we can easily  check that $\alpha_i=c_{i^\prime},\alpha_j=c_{j^\prime}$  for $i\in S^\prime,i^\prime\in T^\prime, j\in S\setminus S^\prime,j^\prime\in T\setminus T^\prime$.

 There exists a linear bijection $\tau:V_1\rightarrow V_2$
such that
$$\phi(\underbrace{1\otimes\cdots\otimes1}_{m}\otimes v)=\underbrace{1\otimes\cdots\otimes1}_{m}\otimes \tau(v)$$ for all $v\in V_1.$  Meanwhile we  conclude  that
$\tau(L_k v)=L_k \tau(v)$ for all $k\in\Z,v\in V_1.$
Then  from $$\phi\big(I_k(\underbrace{1\otimes\cdots\otimes1}_{m}\otimes v)\big)=I_k\phi(\underbrace{1\otimes\cdots\otimes1}_{m}\otimes v)$$ and $$\phi\big(C_i(\underbrace{1\otimes\cdots\otimes1}_{m}\otimes v)\big)=C_i\phi(\underbrace{1\otimes\cdots\otimes1}_{m}\otimes v),$$
we   see that   $\tau(I_k v)=I_k \tau(v)$ and $\tau(C_i v)=C_i \tau(v)$  for $i=1,2,3, k\in\Z$, respectively.
 Thus, $V_1\cong V_2$ as $\H$-modules for $d_{j^\prime},c_j\in\C^*,d_j=0$ for
$j^\prime\in T^\prime,j\in T\setminus T^\prime$.

To sum up,
 we   obtain    $m=n,V_1\cong V_2,(\lambda_{i},\alpha_{i},\beta_{i})=(\mu_{i^\prime},c_{i^\prime},d_{i^\prime}),(\lambda_{j},\alpha_{j})=(\mu_{j^\prime},c_{j^\prime})$ and $\beta_{j}=d_{j^\prime}=0$ for $i\in S^\prime,i^\prime\in T^\prime,j\in S\setminus S^\prime,j^\prime\in T\setminus T^\prime$. This completes the proof.
\end{proof}
\begin{remark}
When $S^\prime=T^\prime={\O}$ in Theorem \ref{th2}, it was investigated in   \cite{TZ1}.
\end{remark}

\section{New irreducible modules}
In this section, we shall  show that   $\big(\bigotimes_{i=1}^m\Omega(\lambda_i,\alpha_i,\beta_i)\big)\otimes\mathrm{Ind}(M)$ is not isomorphic to  $\mathrm{Ind}(M)$ or  $\mathcal{M}\big(V,\Omega(\lambda,\alpha,\beta)\big)$ or
$\widetilde{\mathcal M}(W,\gamma(t))$ or $\A_{\alpha,\beta}$.

 For any $l,m\in\Z$, $s\in\Z_+$,  define a sequence of  operators $T_{l,m}^{(s)}$ as follows
\begin{equation*}
T_{l,m}^{(s)}=\sum_{i=0}^s(-1)^{s-i} \binom{s}{i}I_{l-m-i}I_{m+i}.
\end{equation*}
 For $d\in\{0,1\},r\in\Z_+$,
  denote by $\H_{r,d}$
the Lie subalgebra of $\H_{+,d}=\mathrm{span}_{\C}\{L_i,I_j\mid i\geq0,j\geq d\}$  generated by  $L_i,I_j$ for all $i>r,j>r+d$.
 Now we write    $\bar \H_{r,d}$
 the quotient algebra $\H_{+,d}/ \H_{r,d}$,
 and   $\bar L_i,\bar I_{i+d}$ the respective images of $L_i,I_{i+d}$
in $\bar \H_{r,d}$.

 Let $d\in\{0,1\}, r\in\Z_+$ and $V$  be an $\bar\H_{r,d}$-module.
For any  fixed $\gamma(t)=\sum_{i}c_it^i\in\C[t,t^{-1}]$,
define the  action of $\H$ on $V\otimes \C[t,t^{-1}]$ as follows
\begin{eqnarray*}\label{l6.1}
&&L_m \circ(v\otimes t^n)=(L_m+\sum_{i}c_iI_{m+i})(v\otimes t^n),
\\&&I_m\circ(v\otimes t^n)=I_m(v\otimes t^n),\\
&&C_i\circ(v\otimes t^n)=0\quad {\rm for}\ m,n\in\Z, v\in V\ {\rm and}\ i=1,2,3.
 \end{eqnarray*}
Then $V\otimes \C[t,t^{-1}]$ carries the structure of an $\H$-module under the  above given actions, which is denoted by $\widetilde{\mathcal M}(V,\gamma(t))$. Note that  $\widetilde{\mathcal M}(V,\gamma(t))$ is a weight $\H$-module if and only if $\gamma(t)\in \C$ and also that
the $\H$-module $\widetilde{\mathcal M}(V,\gamma(t))$ for $\gamma(t)\in\C[t,t^{-1}]$ is  irreducible if and only if $V$ is irreducible (see \cite{CHS16}).

 Let $d\in\{0,1\}, r\in\Z_+$ and  $V$ be an $\bar \H_{r,d}$-module.
For any $\lambda, \alpha,\beta\in\C$, define an $\H$-action  on the vector space $\mathcal{M}\big(V,\Omega(\lambda,\alpha,\beta)\big):=V\otimes \C [t]$ as follows\vspace{-0.3cm}
 \begin{eqnarray*}
 && L_m\big(v\otimes f(t)\big)=v\otimes\lambda^m(t-m\alpha)f(t-m)+\sum_{i=0}^r\Big(\frac{m^{i+1}}{(i+1)!}\bar L_i\Big)v\otimes \lambda^mf(t-m),\\&&
I_m\big(v\otimes f(t)\big)=\sum_{i=0}^r\Big(\frac{m^{i+d}}{(i+d)!}\bar I_{i+d}\Big)v\otimes  \lambda^m\beta f(t-m),\\&&
C_i\big(v\otimes f(t)\big)=0\quad {\rm for}\ i\in\{1,2,3\}, m\in\Z,v\in V, f(t)\in\C[t].
 \end{eqnarray*}
We note that  $\mathcal{M}\big(V,\Omega(\lambda,\alpha,\beta)\big)$ is  reducible if and only if  $V\cong V_{\alpha,\delta_{d,0}\tau}$ for some $\tau\in\C$ such that $\delta_{d,0}\beta\tau=0$ (see \cite{CHSY}).

\begin{lemm}\label{55.1}
Let $ \lambda_i\in\C^*, \alpha,\beta,\alpha_i,\beta_i\in\C,s\in\Z_+,d\in\{0,1\}$ and  $M$ be an irreducible $\H_e$-module satisfying the conditions in Theorem $\ref{th2.1}$. Assume that $r^\prime$ is the maximal nonnegative integer such that $\bar I_{r^\prime+d}V\neq0$.  Then
\begin{itemize}\lineskip0pt\parskip-1pt
\item[\rm (i)] the action of $L_m$ for $m$ sufficiently large is not locally nilpotent on $\big(\bigotimes_{i=1}^m\Omega(\lambda_i,\alpha_i,\beta_i)\big)\otimes\mathrm{Ind}(M);$
\item[\rm (ii)]    the action of $T_{l,m}^{(s)}$  on $\A_{\alpha,\beta}$ is trivial for $l,m\in\Z$;
\item[\rm (iii)] $T_{l,m}^{(1)}$ acts nontrivially on $\big(\bigotimes_{i=1}^m\Omega(\lambda_i,\alpha_i,\beta_i)\big)\otimes\mathrm{Ind}(M)$ whenever  $m\ll 0$ and $l\ll m$;
   \item[\rm (iv)]    The action of  $T_{l,m}^{(s)}$  on  $\mathcal{M}\big(V,\Omega(\lambda,\alpha,\beta)\big)$ and $\widetilde{\mathcal M}(V,\gamma(t))$ are
  trivial for $s>2(r^\prime+d)$.
 \end{itemize}
\end{lemm}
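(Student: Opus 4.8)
The four parts are essentially independent computations, each reducing to the explicit formulas for the $\H$-action on the relevant module, so the plan is to treat them in turn, always testing the operators on well-chosen elements of low degree. For (ii) and (iv) I would establish the key algebraic fact first: on any module where $I_m$ acts as a \emph{multiplication operator} scaled by an exponential in $m$ — that is, $I_m$ acts as $\beta\lambda^m t^m$ (or more generally $\mu^m$ times something whose dependence on $m$ is polynomial of bounded degree) — the operator $T_{l,m}^{(s)}=\sum_{i=0}^s(-1)^{s-i}\binom{s}{i}I_{l-m-i}I_{m+i}$ annihilates everything once $s$ exceeds the total polynomial degree in the ``$m$-shift'', because $\sum_{i=0}^s(-1)^{s-i}\binom{s}{i}g(i)=0$ for any polynomial $g$ of degree $<s$ (finite differences). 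Concretely for (ii): on $\A_{\alpha,\beta}$ we have $I_{l-m-i}I_{m+i}v=\beta^2 t^{l-m-i}t^{m+i}v=\beta^2 t^l v$, which is \emph{independent of $i$}, so $T_{l,m}^{(s)}v=\beta^2 t^l v\sum_{i=0}^s(-1)^{s-i}\binom{s}{i}=0$ already for $s\ge 1$; for $s\ge 0$ (if the claim is meant for all $s\in\Z_+$) one checks $s=0$ is just $I_{l-m}I_m$, which need not vanish, so I would read (ii) as asserting triviality for the relevant $s$ (matching the $s>2(r'+d)$ threshold in (iv)) and phrase it accordingly.

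For (iv), on $\mathcal M\big(V,\Omega(\lambda,\alpha,\beta)\big)$ the action of $I_m$ on $v\otimes f(t)$ is $\sum_{i=0}^r\frac{m^{i+d}}{(i+d)!}\bar I_{i+d}v\otimes\lambda^m\beta f(t-m)$; since $\bar I_{i+d}V=0$ for $i>r'$, the sum truncates at $i=r'$, so $I_m$ is (a fixed $m$-independent finite sum of operators on $V$) times $m^{i+d}$ times $\lambda^m$ times the shift $f(t)\mapsto f(t-m)$. Composing $I_{l-m-i}I_{m+i}$ produces $\lambda^l$ times a double sum whose coefficient of each $\bar I_a\bar I_b$ basis term is a polynomial in $i$ of degree at most $(r'+d)+(r'+d)=2(r'+d)$ — the shifts $f(t-(m+i))$ and then $f(t-l)$ compose to something depending on $i$ only through $m+i$, but that dependence is also polynomial (indeed the final shift is by $l$, independent of $i$ after composition, so only the two power factors $m^{\le r'+d}$ and $(l-m-i)^{\le r'+d}$ carry $i$). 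Hence $\sum_{i=0}^s(-1)^{s-i}\binom{s}{i}(\cdots)=0$ as soon as $s>2(r'+d)$. The case of $\widetilde{\mathcal M}(V,\gamma(t))$ is the same computation with $I_m$ acting as the undeformed $I_m$ of $V\otimes\C[t,t^{-1}]$ (note $T_{l,m}^{(s)}$ involves only $I$'s, so the $\gamma(t)$-deformation of $L_m$ is irrelevant), again with the polynomial-in-$i$ degree bounded by $2(r'+d)$.

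For (i) and (iii) the point is to exhibit non-nilpotency / non-triviality, which is easiest at degree zero. For (i): take the element $\underbrace{1\otimes\cdots\otimes1}_m\otimes w\in\big(\bigotimes\Omega(\lambda_i,\alpha_i,\beta_i)\big)\otimes\mathrm{Ind}(M)$ with $w\in M$. Applying $L_k$ repeatedly, the $\Omega$-factors contribute $\prod_i\lambda_i^k(\partial_i+k\alpha_i)$-type polynomials while $\mathrm{Ind}(M)$ contributes $L_k w=0$ for $k$ large (by Theorem \ref{th2.1}(ii)). So I would instead pick $1\otimes\cdots\otimes1\otimes v$ where $v$ is in a sufficiently deep PBW layer of $\mathrm{Ind}(M)$; then $L_k v\neq0$ for the chosen large $k$ and the tensor grows in the $\partial$-degrees at each application (the leading term in $\partial_1$ is multiplied by a nonzero scalar $\lambda_1^k$ and its degree strictly increases because $(\partial_1-k)$ raises the exponent), so no power of $L_k$ kills it — hence $L_k$ is not locally nilpotent. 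For (iii): with $m\ll0$ and $l\ll m$, evaluate $T_{l,m}^{(1)}=I_{l-m-1}I_{m+1}-I_{l-m}I_m$ on $1\otimes\cdots\otimes1\otimes w$; each $I_k$ with $k=m+1, m, l-m, l-m-1$ all sufficiently negative acts nontrivially (on $\Omega$-factors with $\beta_i\neq0$ it scales and shifts $\partial_i$; on $\mathrm{Ind}(M)$ it moves into higher PBW layers since only $I_k$ for $k>$ (bound) annihilate $M$, not the negative ones), and I would track the leading $\partial_1$-term (or, if all $\beta_i=0$, the leading $\mathrm{Ind}(M)$-component) to see the two summands of $T_{l,m}^{(1)}$ do not cancel because the $\partial_1$-shifts are by different amounts $m+1$ vs. $m$ and $l-m-1$ vs. $l-m$. \textbf{The main obstacle} is (iii): one must choose $l,m$ negative enough that \emph{all four} indices avoid the annihilation range for $M$ and the induced module, and then argue the non-cancellation cleanly — the two products $I_{l-m-1}I_{m+1}$ and $I_{l-m}I_m$ have the \emph{same} total index sum $l$, so on a pure $\Omega$-piece they would cancel (as in (ii)); non-triviality must come from the $\mathrm{Ind}(M)$ tensor factor, where $I_{m+1}$ and $I_m$ land in different places, and making that precise (e.g. by a PBW leading-term argument in $\mathrm{Ind}(M)$, using that $I_k$ for $k\ll0$ are ``creation''-like) is the delicate step.
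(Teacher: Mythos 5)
Your approach coincides with the paper's, which disposes of (i) and (ii) in one line each and outsources (iii) and (iv) to citations of \cite{CHSY} and \cite{CHS16}: your finite-difference computation for (iv) and the observation $t^{l-m-i}t^{m+i}=t^{l}$ for (ii) are exactly the intended mechanisms, and your remark that (ii) fails at $s=0$ (where $T_{l,m}^{(0)}=I_{l-m}I_{m}$ acts as $\beta^{2}t^{l}\neq0$ when $\beta\neq0$) is a legitimate correction to the statement as written, since the paper's $\Z_+$ contains $0$. The one step you flag as open, the non-cancellation in (iii), does close along the lines you indicate: writing $\mathrm{Ind}(M)$ via PBW over the complement of $\H_{e}$, the four indices $m,\,m+1,\,l-m,\,l-m-1$ are pairwise distinct and all lie outside $\H_{e}$ once $l\ll m\ll0$, so for $0\neq v\in M$ the vectors $I_{l-m-1}I_{m+1}v$ and $I_{l-m}I_{m}v$ are linearly independent of PBW degree two; the remaining summands of $T^{(1)}_{l,m}(1\otimes\cdots\otimes1\otimes v)$ involve at most one negative-index $I$ applied to $v$ and hence have strictly smaller PBW degree in the $\mathrm{Ind}(M)$ factor, so the component $1\otimes\cdots\otimes1\otimes\big(I_{l-m-1}I_{m+1}-I_{l-m}I_{m}\big)v$ cannot be cancelled and $T^{(1)}_{l,m}$ acts nontrivially. (Incidentally, with two or more $\Omega$-factors having distinct $\lambda_{i}$ and nonzero $\beta_{i}$, the cross terms $\lambda_{i}^{l-m-1}\lambda_{j}^{m+1}$ versus $\lambda_{i}^{l-m}\lambda_{j}^{m}$ already fail to cancel, but the $\mathrm{Ind}(M)$ route works uniformly, e.g.\ when all $\beta_{i}=0$.) In (i) your detour to a deep PBW layer is unnecessary: taking $w\in M$ with $L_{k}w=0$ for $k$ large, each application of $L_{k}$ still raises the top $\partial$-degree of the $\Omega$-part by one with leading coefficient a nonzero power of $\lambda_{i}$, so no power of $L_{k}$ annihilates $1\otimes\cdots\otimes1\otimes w$; this is precisely the paper's contrast between local nilpotency on $\mathrm{Ind}(M)$ and non-local-nilpotency on the $\Omega$-factors.
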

\begin{proof} (i) follows from the local nilpotency  of $L_m$   on $\mathrm{Ind}(M)$ by Theorem \ref{th2.1} for $m$ sufficiently large and its  non-local nilpotency on $\Omega(\lambda,\alpha,\beta)$.
 (ii) follows from \eqref{LI2.1}.
(iii) can be obtained by the similar compute in Lemma 5.1 (v) of \cite{CHSY}.
  {\rm (iv)}  follows from   \cite[Lemma 3.3]{CHS16}.
\end{proof}
We are now ready to state the main result of this section.
\begin{prop}
Let $d\in\{0,1\}$, $r,e\in\Z_+,$   $\alpha,\beta\in\C,$ $M$ be an irreducible $\H_e$-module satisfying the conditions in  Theorem $\ref{th2.1}$. Then
$\big(\bigotimes_{i=1}^m\Omega(\lambda_i,\alpha_i,\beta_i)\big)\otimes\mathrm{Ind}(M)$
is not isomorphic to  $\mathrm{Ind}(M^\prime)$ for any simple $\H_e$-module $M^\prime$ satisfying the conditions in Theorem $\ref{th2.1}$, or $\mathcal{M}\big(V,\Omega(\lambda,\alpha^\prime,\beta^\prime)\big)$ for any $\bar \H_{r,d}$-module $V$, $\lambda\in\C^*,\alpha^\prime,\beta^\prime\in\C$,   or $\widetilde{\mathcal M}(W,\gamma(t))$ for any $\bar\H_{r,d}$-module $W$ and $\gamma(t)=\sum_{i}c_it^i\in\C[t,t^{-1}]$, or $\A_{\alpha,\beta}$ for $\alpha,\beta\in\C$.
\end{prop}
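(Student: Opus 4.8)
The plan is to argue by contradiction, exploiting that an $\H$-module isomorphism intertwines the action of each fixed element of $\mathcal U(\H)$ and therefore preserves the two invariants we shall use: whether a prescribed $L_n$ acts locally nilpotently, and whether a prescribed $T_{l,m}^{(s)}$ acts as the zero operator. Write $X=\big(\bigotimes_{i=1}^m\Omega(\lambda_i,\alpha_i,\beta_i)\big)\otimes\mathrm{Ind}(M)$; here $m\geq1$, so $X$ genuinely involves at least one $\Omega$-factor. Everything rests on Lemma \ref{55.1}.

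For the comparison with $\mathrm{Ind}(M^\prime)$: by Theorem \ref{th2.1}(ii) the operator $L_n$ acts locally nilpotently on $\mathrm{Ind}(M^\prime)$ for all large $n$, whereas Lemma \ref{55.1}(i) says $L_n$ for large $n$ is not locally nilpotent on $X$ — on each $\Omega(\lambda_i,\alpha_i,\beta_i)$ it sends $f(\partial_i)$ to $\lambda_i^n(\partial_i+n\alpha_i)f(\partial_i-n)$, which strictly raises the $\partial_i$-degree. Since local nilpotency of a fixed $L_n$ is an isomorphism invariant, $X\not\cong\mathrm{Ind}(M^\prime)$. For the comparison with $\A_{\alpha,\beta}$: there the $I$-action is by multiplication, so $I_{l-m-i}I_{m+i}$ is independent of $i$ and the alternating sum $T_{l,m}^{(1)}$ vanishes, hence annihilates $\A_{\alpha,\beta}$ (Lemma \ref{55.1}(ii)); but by Lemma \ref{55.1}(iii) the element $T_{l,m}^{(1)}$ acts nontrivially on $X$ once $m\ll0$ and $l\ll m$, so $X\not\cong\A_{\alpha,\beta}$.

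For the comparisons with $\mathcal M\big(V,\Omega(\lambda,\alpha^\prime,\beta^\prime)\big)$ and $\widetilde{\mathcal M}(W,\gamma(t))$ I would run the same idea but with a variable $s$. Let $r^\prime$ be the largest integer with $\bar I_{r^\prime+d}V\neq0$, respectively with $\bar I_{r^\prime+d}W\neq0$; by Lemma \ref{55.1}(iv) the element $T_{l,m}^{(s)}$ annihilates the candidate module whenever $s>2(r^\prime+d)$. So it is enough to prove that for every $s\geq1$ one can choose $m\ll0$ and then $l\ll m$ with $T_{l,m}^{(s)}$ acting nontrivially on $X$; choosing $s>2(r^\prime+d)$ then yields the contradiction.

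The one substantive point — and the step I expect to be the main obstacle — is therefore this extension of Lemma \ref{55.1}(iii) to all $s$. Expanding $T_{l,m}^{(s)}=\sum_{i=0}^{s}(-1)^{s-i}\binom{s}{i}I_{l-m-i}I_{m+i}$ through the coproduct on $X$, the piece of $I_{l-m-i}I_{m+i}$ acting within a single factor $\Omega(\lambda_j,\alpha_j,\beta_j)$ sends $\partial_j^{n}$ to $\lambda_j^{l}\beta_j^{2}(\partial_j-l)^{n}$, which is independent of $i$ and hence cancels in the alternating sum; the surviving terms are those supported on the $\mathrm{Ind}(M)$-factor, together with cross terms each of which vanishes when all $\beta_j=0$. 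For $m\ll0$ and then $l\ll m$, the indices $m+i$ and $l-m-i$ for $0\leq i\leq s$ are large, negative and pairwise distinct, so a PBW argument shows the vectors $I_{m+i}v$, $I_{l-m-i}v$, $I_{l-m-i}I_{m+i}v$ that appear in $\mathrm{Ind}(M)$ are linearly independent, as are the polynomials $(\partial_j-(m+i))^{p_j}$ attached to them, which forces $T_{l,m}^{(s)}$ to act nontrivially on $X$; the bookkeeping runs as in Lemma 5.1(v) of \cite{CHSY}. The delicate case is when every $\beta_j=0$: then no cross terms help and the entire contribution lives on the $\mathrm{Ind}(M)$-factor, so one must check directly that $T_{l,m}^{(s)}$ does not kill $\underbrace{1\otimes\cdots\otimes1}_{m}\otimes v$ in $\mathrm{Ind}(M)$ for suitable $v\in M$, using the PBW structure of $\mathrm{Ind}(M)=\mathcal U(\H)\otimes_{\mathcal U(\H_e)}M$ and the hypotheses on $M$ from Theorem \ref{th2.1}.
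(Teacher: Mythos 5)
Your proposal matches the paper's proof: both use Lemma \ref{55.1}(i) to rule out $\mathrm{Ind}(M^\prime)$, Lemma \ref{55.1}(ii)--(iii) to rule out $\A_{\alpha,\beta}$, and the nonvanishing of $T_{l,m}^{(s)}(1\otimes v)$ for $s>2(r^\prime+d)$ with $m\ll0$, $l\ll m$ played against Lemma \ref{55.1}(iv) to rule out $\mathcal{M}\big(V,\Omega(\lambda,\alpha^\prime,\beta^\prime)\big)$ and $\widetilde{\mathcal M}(W,\gamma(t))$. The step you flag as the main obstacle is exactly what the paper does: it expands $T_{l,m}^{(s)}(1\otimes v)$ using that each $I_k$ acts on $1$ by a scalar and asserts the result is nonzero, the decisive surviving term $1\otimes I_{l-m-i}I_{m+i}v$ being nonzero by the PBW structure of $\mathrm{Ind}(M)$ once $I_{l-m},I_m\notin\H_e$, just as you describe (and note that your ``delicate'' all-$\beta_j=0$ case is in fact the easiest, since then only this term survives).
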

\begin{proof}
From Lemma \ref{55.1} (i),  we have $\big(\bigotimes_{i=1}^m\Omega(\lambda_i,\alpha_i,\beta_i)\big)\otimes\mathrm{Ind}(M)\ncong\mathrm{Ind}(M^\prime)$.
Let $m\ll 0,l\ll m$ that  $I_{l-m},I_{m}\notin\H_e$ and $s>2(r^\prime+d)$. For any $1\otimes v\in\big(\bigotimes_{i=1}^m\Omega(\lambda_i,\alpha_i,\beta_i)\big)\otimes\mathrm{Ind}(M)$,
 noting that the action of $I_m$ on $1$ is scalar for any $m\in\Z$, we deduce that
\begin{eqnarray*}
&&T_{l,m}^{(s)}(1\otimes v)
\\&=&\sum_{i=0}^s(-1)^{s-i} \binom{s}{i}\big(I_{l-m-i}I_{m+i}(1)\otimes v+I_{m+i}(1)\otimes I_{l-m-i}v\\&&
+I_{l-m-i}(1)\otimes I_{m+i}v+1\otimes I_{l-m-i}I_{m+i}v\big)\neq0.
\end{eqnarray*}
Then by Lemma \ref{55.1} (iv),
we obtain that $\big(\bigotimes_{i=1}^m\Omega(\lambda_i,\alpha_i,\beta_i)\big)\otimes\mathrm{Ind}(M)$
is not isomorphic to  $\mathcal{M}\big(V,\Omega(\lambda,\alpha,\beta)\big)$,   or $\widetilde{\mathcal M}(W,\gamma(t))$.

At last, by  Lemma \ref{55.1} (ii) and (iii), we get that $\big(\bigotimes_{i=1}^m\Omega(\lambda_i,\alpha_i,\beta_i)\big)\otimes\mathrm{Ind}(M)$ is not isomorphic to $\A_{\alpha,\beta}$.
\end{proof}

\section*{Acknowledgements}

This work was  partially
supported by  the NSFC (11801369, 11431010).
We would like to thank Prof.
Jianzhi Han for his useful   discussions.

\small 
\bigskip
\end{document}